\newcommand{\nnum}{\nonumber}
\newcommand{\EQ}{\begin{eqnarray}}
\newcommand{\EN}{\end{eqnarray}}
\newcommand{\EQQ}{\begin{eqnarray*}}
\newcommand{\ENN}{\end{eqnarray*}}
\newcommand{\bremark}{\begin{remark} \begin{rm} }
\newcommand{\eremark}{ \end{rm} \rule{1mm}{2mm}
\end{remark} }
\newcommand{\btheorem}{\begin{theorem} \begin{rm} }
\newcommand{\etheorem}{ \end{rm} \rule{1mm}{2mm}
\end{theorem} }
\newcommand{\blemma}{\begin{lemma} \begin{rm} }
\newcommand{\elemma}{ \end{rm} \rule{1mm}{2mm}
\end{lemma} }
\newcommand{\bcorollary}{\begin{corollary} \begin{rm} }
\newcommand{\ecorollary}{ \end{rm} \rule{1mm}{2mm}
\end{corollary} }
\newcommand{\bdefinition}{\begin{definition}\begin{rm} }
\newcommand{\edefinition}{ \end{rm} \rule{1mm}{2mm}
\end{definition} }
\newcommand{\bproposition}{\begin{proposition} \begin{rm} }
\newcommand{\eproposition}{ \end{rm} \rule{1mm}{2mm}
\end{proposition} }
\newcommand{\bexample}{\begin{example} \begin{rm} }
\newcommand{\eexample}{ \end{rm} \rule{1mm}{2mm}
\end{example} }
\newcommand{\basm}{\begin{assumption} \begin{rm}}
\newcommand{\easm}{\end{rm} %\rule{1mm}{2mm}
\end{assumption}}
\newcommand{\real}{\mathbb{R}}
\renewcommand{\natural}{\mathbb{N}}
\newcommand{\DD}{\mathcal{D}}
\newcommand{\GG}{\mathcal{G}}
\newcommand{\LL}{\mathcal{L}}
\newcommand{\dist}{\operatorname{dist}}
\newcommand{\until}[1]{\{1,\dots, #1\}}
\newcommand{\subscr}[2]{#1_{\textup{#2}}}
\newtheorem{theorem}{\bf Theorem}[section]
\newtheorem{lemma}{\bf Lemma}[section]
\newtheorem{definition}{\bf Definition}[section]
\newtheorem{remark}{\bf Remark}[section]
\newtheorem{corollary}{\bf Corollary}[section]
\newtheorem{proposition}{\bf Proposition}[section]
\newtheorem{example}{\bf Example}[section]
\newtheorem{assumption}{\bf Assumption}[section]
\newcommand\oprocendsymbol{\hbox{$\bullet$}}
\newcommand\oprocend{\relax\ifmmode\else\unskip\hfill\fi\oprocendsymbol}
\date{}
\begin{document}

%\title{An approximate dual subgradient algorithm for distributed
%  non-convex constrained optimization}

\title{An approximate dual subgradient algorithm for multi-agent non-convex optimization}

\author{ Minghui Zhu and Sonia Mart{\'\i}nez \thanks{M. Zhu is with
    Laboratory for Information and Decision Systems,
    Massachusetts Institute of Technology, 77 Massachusetts Avenue, Cambridge MA, 02139, {\tt\small (mhzhu@mit.edu)}. S. Mart{\'\i}nez is with Department of
    Mechanical and Aerospace Engineering, University of California,
    San Diego, 9500 Gilman Dr, La Jolla CA, 92093, {\tt\small
      (soniamd@ucsd.edu)}.}}

\maketitle

\begin{abstract}
  We consider a multi-agent optimization problem where agents subject
  to local, intermittent interactions aim to minimize a sum of local objective
  functions subject to a global inequality constraint and a global
  state constraint set. In contrast to previous work, we do not
  require that the objective, constraint functions, and state
  constraint sets to be convex. In order to deal with time-varying
  network topologies satisfying a standard connectivity assumption, we
  resort to consensus algorithm techniques and the Lagrangian duality method.
  We slightly relax the requirement of exact consensus, and propose a
  distributed approximate dual subgradient algorithm to enable agents
  to asymptotically converge to a pair of primal-dual solutions to an
  approximate problem. To guarantee convergence, we assume that the
  Slater's condition is satisfied and the optimal solution set of the
  dual limit is singleton. We implement our algorithm over a source
  localization problem and compare the performance with existing
  algorithms.
\end{abstract}

\section{Introduction}\label{sec:introduction}

Recent advances in computation, communication, sensing and actuation
have stimulated an intensive research in networked multi-agent
systems. In the systems and controls community, this has translated
into how to solve global control problems, expressed by global
objective functions, by means of local agent actions. Problems
considered include multi-agent consensus or
agreement~\cite{AJ-JL-ASM:02,ROS-RMM:03c}, coverage
control~\cite{FB-JC-SM:09,JC-SM-TK-FB:02j}, formation
control~\cite{JAF-RMM:04,WR-RWB:08} and sensor
fusion~\cite{LX-SB-SL:05}.

The seminal work~\cite{DPB-JNT:97} provides a framework to tackle
optimizing a global objective function among different processors
where each processor knows the global objective function. In
multi-agent environments, a problem of focus is to minimize a sum of
local objective functions by a group of agents, where each function
depends on a common global decision vector and is only known to a
specific agent. This problem is motivated by others in distributed
estimation~\cite{RDN:03}~\cite{SSR-AN-VVV:08b}, distributed source
localization~\cite{MGR-RDN:04}, and network utility
maximization~\cite{KPK-AM-DT:98}. More recently, consensus techniques
have been proposed to address the issues of switching topologies,
asynchronous computation and coupling in objective functions; see for
instance~\cite{AN-AO:09,AN-AO-PAP:08,MZ-SM:09c}.
More specifically, the paper~\cite{AN-AO:09} presents the first
analysis of an algorithm that combines average consensus schemes with
subgradient methods. Using projection in the algorithm
of~\cite{AN-AO:09}, the authors in~\cite{AN-AO-PAP:08} further address
a more general scenario that takes local state constraint sets into
account. Further, in~\cite{MZ-SM:09c} we develop two distributed
primal-dual subgradient algorithms, which are based on saddle-point
theorems, to analyze a more general situation that incorporates global
inequality and equality constraints. The aforementioned algorithms are
extensions of classic (primal or primal-dual) subgradient methods
which generalize gradient-based methods to minimize non-smooth
functions. This requires the optimization problems under consideration
to be convex in order to determine a global optimum.

The focus of the current paper is to relax the convexity assumption
in~\cite{MZ-SM:09c}. In order to deal with all aspects of our
multi-agent setting, our method integrates Lagrangian dualization,
subgradient schemes, and average consensus algorithms. Distributed
function computation by a group of anonymous agents interacting
intermittently can be done via agreement
algorithms~\cite{FB-JC-SM:09}. However, agreement algorithms are
essentially convex, and so we are led to the investigation of
nonconvex optimization solutions via dualization. The techniques of
dualization and subgradient schemes have been popular and efficient
approaches to solve both convex programs (e.g., in~\cite{DPB:09}) and
nonconvex programs (e.g., in~\cite{RSB:11,RSB-CYK:07}).

\emph{Statement of Contributions.} Here, we investigate a
multi-agent optimization problem where agents desire to agree upon a
global decision vector minimizing the sum of local objective functions
in the presence of a global inequality constraint and a global state
constraint set. Agent interactions are changing with time. The objective, constraint
functions, as well as the state-constraint set, can be nonconvex. To
deal with both nonconvexity and time-varying interactions, we first
define an approximated problem where the exact consensus is slightly
relaxed. We then propose a distributed dual subgradient algorithm to
solve it, where the update rule for local dual estimates combines a
dual subgradient scheme with average consensus algorithms, and local
primal estimates are generated from local dual optimal solution
sets. This algorithm is shown to asymptotically converge to a pair of
primal-dual solutions to the approximate problem under the following
assumptions: firstly, the Slater's condition is satisfied; secondly,
the optimal solution set of the dual limit is singleton; thirdly,
dynamically changing network topologies satisfy some standard
connectivity condition.

A conference version of this manuscript was published
in~\cite{MZ-SM:10b}. Main differences are the following: (i) by
assuming that the optimal solution set of the dual limit is a
singleton, and changing the update rule in the dual estimates, we are
able to determine a global solution in contrast to an approximate
solution in~\cite{MZ-SM:10b}; (ii) we present a simple criterion to
check the new sufficient condition for nonconvex quadratic
programming; (iii) new simulation results of our algorithm on a source
localization example and a comparison of its performance with existing
algorithms are performed. %Due to space limitations, the simulations

\section{Problem formulation and preliminaries}\label{sec:formulation}

Consider a networked multi-agent system where agents are labeled by
$i\in V := \until N$. The multi-agent system operates in a synchronous way
at time instants $k\in \natural \cup \{0\}$, and its topology will be
represented by a directed weighted graph ${\GG}(k) = (V,E(k),A(k))$,
for $k \ge 0$. Here, $A(k) := [a^i_j(k)] \in \real^{N\times N}$ is the
adjacency matrix, where the scalar $a^i_j(k)\geq0$ is the weight assigned to the
edge $(j,i)$ pointing from agent~$j$ to agent~$i$, and $E(k)\subseteq V\times V\setminus {\rm diag}(V)$ is the set of edges with non-zero weights. The set of in-neighbors of
agent $i$ at time $k$ is denoted by ${\mathcal{N}}_i(k) = \{j\in V \;
| \; (j,i)\in E(k) \text{ and } j\neq i\}$. Similarly, we define the
set of out-neighbors of agent $i$ at time $k$ as ${\mathcal{N}}_i^{\rm
  out}(k) = \{j\in V \; | \; (i,j)\in E(k) \text{ and } j\neq i\}$. We
here make the following assumptions on network communication
graphs:

\begin{assumption} [Non-degeneracy]
  There exists a constant $\alpha>0$ such that
  $a_i^i(k)\geq\alpha$, and $a^i_j(k)$, for $i\neq j$, satisfies
  $a^i_j(k)\in \{0\}\cup [\alpha,\;1],\;$ for all
  $k\geq0$. \label{asm20}
\end{assumption}

\begin{assumption}[Balanced Communication]%\footnote{It is also
%    referred to as double stochasticity of the adjacency matrix
%    $A(k)$.}
It holds that $\sum_{j\in V}
  a_j^i(k)=1$ for all $i\in V$ and $k\geq0$, and $\sum_{i\in V}
  a_j^i(k)=1$ for all $j\in V$ and $k\geq0$.\label{asm30}
\end{assumption}

\begin{assumption} [Periodical Strong Connectivity]
  There is a positive integer $B$ such that, for all $k_0\geq0$,
  the directed graph $(V,\bigcup_{k=0}^{B-1}E(k_0 + k))$ is strongly
  connected. \label{asm10}
\end{assumption}

The above network model is standard to characterize a networked
multi-agent system, and has been widely used in the analysis of
average consensus algorithms; e.g., see~\cite{ROS-RMM:03c,AO-JNT:07},
and distributed optimization
in~\cite{AN-AO-PAP:08,MZ-SM:09c}. Recently, an algorithm is given
in~\cite{BG-JC:09} which allows agents to construct a balanced graph
out of a non-balanced one under certain assumptions.

The objective of the agents is to cooperatively solve the
following primal problem ($P$):
\begin{align}
  \min_{z\in {\real}^n } \sum_{i\in V} f_i(z),\quad
  {\rm s.t.} \quad g(z)\leq 0, \quad z\in X,\label{e3}
\end{align}
where $z\in\real^n$ is the global decision vector. The function $f_i :
{\real}^n \rightarrow {\real}$ is only known to agent~$i$, continuous,
and referred to as the objective function of agent~$i$. The set
$X\subseteq {\real}^n$, the state constraint set, is compact. The
function $g : {\real}^n \rightarrow {\real}^m$ are continuous, and the
inequality $g(z) \leq 0$ is understood component-wise; i.e.,
$g_{\ell}(z)\leq0$, for all $\ell \in \until m$, and represents a
global inequality constraint. We will denote $f(z) := \sum_{i\in V}
f_i(z)$ and $Y := \{z\in{\real}^n \; | \; g(z)\leq0\}$.  We will
assume that the set of feasible points is non-empty; i.e., $X \cap Y
\neq \emptyset$. Since $X$ is compact and $Y$ is closed, then we can
deduce that $X\cap Y$ is compact. The continuity of $f$ follows from
that of $f_i$. In this way, the optimal value $p^*$ of the problem
($P$) is finite and $X^*$, the set of primal optimal points, is
non-empty. We will also assume the following Slater's condition holds:

\begin{assumption}[Slater's Condition] There exists a vector
  $\bar{z}\in X$ such that $g(\bar{z}) < 0$. Such
  $\bar{z}$ is referred to as a Slater vector of the problem ($P$).\label{asm5}
\end{assumption}

\begin{remark} All the agents can agree upon a common Slater vector
  $\bar{z}$ through a maximum-consensus scheme. This can be easily
  implemented as part of an initialization step, and thus the
  assumption that the Slater vector is known to all agents does not
  limit the applicability of our algorithm. Specifically, the maximum-consensus algorithm is
  described as follows:

  Initially, each agent~$i$ chooses a Slater vector $z_i(0)\in X$ such
  that $g(z_i(0)) < 0$.  At every time $k\geq0$, each agent~$i$ updates
  its estimates by using the rule of $z_i(k+1) = \max_{j\in{\mathcal{N}}_i(k)\cup\{i\}} z_j(k)$,
where we use the following relation for vectors: for $a,b\in
\real^n$, $a < b$ if and only if there is some $\ell \in
\until{n-1}$ such that $a_{\kappa}=b_{\kappa}$ for all $\kappa <
\ell$ and $a_{\ell} < b_{\ell}$.

The periodical strong connectivity assumption~\ref{asm10} ensures that
after at most $(N-1)B$ steps, all the agents reach the consensus;
i.e., $z_i(k) = \max_{j\in V} z_j(0)$ for all $k\geq(N-1)B$. In the
remainder of this paper, we assume that the Slater vector $\bar{z}$ is
known to all the agents. \oprocend\label{rem2}
\end{remark}

In~\cite{MZ-SM:09c}, in order to solve the convex case of the problem
($P$) (i.e.; $f_i$ and $g$ are convex functions and $X$ is a convex
set), we propose two distributed primal-dual subgradient algorithms
where primal (resp. dual) estimates move along subgradients
(resp. supergradients) and are projected onto convex sets. The absence
of convexity impedes the use of the algorithms in~\cite{MZ-SM:09c}
since, on the one hand, (primal) gradient-based algorithms are easily
trapped in local minima; on the other hand, projection maps may not be
well-defined when (primal) state constraint sets are nonconvex. In the
sequel, we will employ Lagrangian dualization, subgradient methods and
average consensus schemes to design a distributed algorithm which is
able to find an approximate solution to the problem ($P$).

Towards this end, we construct a directed cyclic graph $\subscr{\GG}{cyc} := (V,
\subscr{E}{cyc})$ where $|\subscr{E}{cyc}| = N$. We assume that each
agent has a unique in-neighbor (and out-neighbor). The out-neighbor
(resp. in-neighbor) of agent $i$ is denoted by $i_D$
(resp. $i_U$). With the graph $\subscr{\GG}{cyc}$, we will study the
following approximate problem of problem ($P$):
\begin{align}
  & \min_{(x_i) \in \real^{nN}} \sum_{i\in V} f_i(x_i),\nnum\\
  & {\rm s.t.} \quad g(x_i)\leq 0, \quad - x_i + x_{i_D} - \Delta \leq 0,\quad
  x_i - x_{i_D} - \Delta \leq 0, \quad x_i\in X,\quad \forall i\in V,\label{e6}
\end{align} where $\Delta := \delta\textbf{1}$, with $\delta$ a small
positive scalar, and $\textbf{1}$ is the column vector of $n$
ones. The problem~\eqref{e6} provides an approximation of the problem
($P$), and will be referred to as problem ($P_{\Delta}$). In
particular, the approximate problem~\eqref{e6} reduces to the problem
($P$) when $\delta = 0$. Its optimal value and the set of optimal
solutions will be denoted by $p^*_{\Delta}$ and $X^*_{\Delta}$,
respectively. Similarly to the problem ($P$), $p^*_{\Delta}$ is finite
and $X^*_{\Delta}\neq\emptyset$.

\begin{remark}
  The cyclic graph $\subscr{\GG}{cyc}$ can be replaced by any strongly
  connected graph $\GG$. Given $\GG$, each agent $i$ is endowed with
  two inequality constraints: $x_i - x_j - \Delta \leq 0$ and $- x_i +
  x_j - \Delta \leq 0$, for each out-neighbor $j$. This set of
  inequalities implies that any feasible solution $x = (x_i)_{i\in V}$
  of problem ($P_{\Delta}$) satisfies the approximate consensus; i.e.,
  $\max_{i,j\in V}\|x_i - x_j\| \leq N\delta$. For simplicity, we will
  use the cyclic graph $\subscr{\GG}{cyc}$, with a minimum number
  of constraints, as the initial graph. \oprocend\label{rem1}
\end{remark}

\subsection{Dual problems}

Before introducing dual problems, let us denote by $\Xi' :=
\real^m_{\geq0}\times\real^{nN}_{\geq0}\times\real^{nN}_{\geq0}$, $\Xi
:=
\real^{mN}_{\geq0}\times\real^{nN}_{\geq0}\times\real^{nN}_{\geq0}$,
$\xi_i:=(\mu_i,\lambda,w)\in \Xi'$, $\xi := (\mu,\lambda,w)\in\Xi$
and $x := (x_i)\in X^N$. The dual problem ($D_{\Delta}$) associated
with $(P_\Delta)$ is given by
\begin{align} \max_{\mu,\lambda,w} Q(\mu,\lambda,w),\quad
  {\rm s.t.}\quad \mu,\lambda,w\geq0,\label{e14}
\end{align}
where $\mu := (\mu_i)\in\real^{mN}$, $\lambda :=
(\lambda_i)\in\real^{nN}$ and $w := (w_i)\in \real^{nN}$. Here, the
dual function $Q : \Xi \rightarrow\real$ is given as
$Q(\xi)\equiv Q(\mu,\lambda,w) := \inf_{x\in X^N} {\LL}(x,\mu,\lambda,w)$,
where ${\LL} : \real^{nN}\times\Xi\rightarrow\real$ is the Lagrangian
function
\begin{align*}
  \LL(x,\xi)\equiv{\LL}(x,\mu,\lambda,w) := \sum_{i\in V} \big(f_i(x_i) + \langle\mu_i, g(x_i)\rangle
  + \langle \lambda_i, - x_i + x_{i_D} - \Delta\rangle + \langle w_i,
  x_i - x_{i_D} -\Delta\rangle\big).
\end{align*}
We denote the dual optimal value of the problem ($D_{\Delta}$) by
$d^*_{\Delta}$ and the set of dual optimal solutions by
$D^*_{\Delta}$. We endow each agent $i$ with the local Lagrangian function
$\LL_i : \real^n\times\Xi'\rightarrow\real$ and the local dual
function $Q_i : \Xi'\rightarrow\real$ defined by
\begin{align*}
  \LL_i(x_i,\xi_i) &:= f_i(x_i) + \langle\mu_i, g(x_i)\rangle
  + \langle -\lambda_i+\lambda_{i_U}, x_i\rangle
  + \langle w_i-w_{i_U}, x_i\rangle - \langle \lambda_i, \Delta \rangle - \langle w_i, \Delta \rangle,\nnum\\
  Q_i(\xi_i) &:= \inf_{x_i\in X}\LL_i(x_i,\xi_i).
\end{align*}

In the approximate problem ($P_{\Delta}$), the introduction of $-\Delta \leq
x_i - x_{i_D} \leq \Delta$, $i\in V$, renders the $f_i$ and $g$
separable. As a result, the global dual function $Q$ can be decomposed
into a simple sum of the local dual functions $Q_i$. More precisely,
the following holds:
\begin{align*}
  Q(\xi) = \inf_{x\in X^N}\sum_{i\in V} \big(f_i(x_i) + \langle\mu_i, g(x_i)\rangle
  + \langle \lambda_i, - x_i + x_{i_D} - \Delta\rangle + \langle w_i, x_i - x_{i_D} -\Delta\rangle\big).
\end{align*}

Notice that in the sum of $\sum_{i\in V}\langle \lambda_i, - x_i + x_{i_D} - \Delta\rangle$, each $x_i$ for any $i\in V$ appears in two terms: one is $\langle \lambda_i, - x_i + x_{i_D} - \Delta\rangle$, and the other is $\langle \lambda_{i_U}, - x_{i_U} + x_i - \Delta\rangle$. With this observation, we regroup the terms in the summation in terms of $x_i$, and have the following: \begin{align}
  &Q(\xi) =
  \inf_{x\in X^N}\sum_{i\in V} \big(f_i(x_i) + \langle\mu_i, g(x_i)\rangle
  + \langle -\lambda_i+\lambda_{i_U}, x_i\rangle + \langle w_i-w_{i_U}, x_i\rangle - \langle \lambda_i, \Delta \rangle - \langle w_i, \Delta \rangle\big)\nnum\\
  & = \sum_{i\in V} \inf_{x_i\in X} \big(f_i(x_i) + \langle\mu_i, g(x_i)\rangle
  + \langle -\lambda_i+\lambda_{i_U}, x_i\rangle + \langle w_i-w_{i_U}, x_i\rangle - \langle \lambda_i, \Delta \rangle - \langle w_i, \Delta \rangle\big)\nnum\\
  &= \sum_{i\in V}Q_i(\xi_i).\label{e11}
\end{align}

Note that $\sum_{i\in V}Q_i(\xi_i)$ is not separable since $Q_i$
depends on neighbor's multipliers $\lambda_{i_U}$, $w_{i_U}$.

\subsection{Dual solution sets}

The Slater's condition ensures the boundedness of dual solution sets for
convex optimization; e.g.,~\cite{JBHU-CL:96,AN-AO:08b}. We will shortly see that the
Slater's condition plays the same role in nonconvex optimization. To
achieve this, we define the function $\hat{Q}_i :
\real^m_{\geq0}\times\real^n_{\geq0}\times\real^n_{\geq0} \rightarrow
\real$ as follows:
\begin{align*}
  \hat{Q}_i(\mu_i,\lambda_i,w_i) = \inf_{x_i\in X, x_{i_D}\in X}\big(f_i(x_i) + \langle\mu_i, g(x_i)\rangle
  + \langle \lambda_i, - x_i + x_{i_D} - \Delta\rangle + \langle w_i,
  x_i - x_{i_D} -\Delta\rangle\big).
\end{align*}

Let $\bar{z}$ be a Slater vector for problem ($P$). Then $\bar{x} =
(\bar{x}_i)\in X^N$ with $\bar{x}_i = \bar{z}$ is a Slater vector of
the problem ($P_{\Delta}$). Similarly to (3) and (4)
in~\cite{MZ-SM:09c}, which make use of Lemma~3.2 in the same paper, we
have that for any $\mu_i,\lambda_i,w_i\geq0$, it holds that
\begin{align} \max_{\xi\in D^*_{\Delta}}\|\xi\| \leq N\max_{i\in
    V}\frac{f_i(\bar{z})-\hat{Q}_i(\mu_i,\lambda_i,w_i)}{\beta(\bar{z})},\label{e31}
\end{align}
where $\beta(\bar{z}):=\min\{\min_{\ell\in\until{m}}-g_{\ell}(\bar{z}),\delta\}$.
Let $\mu_i$, $\lambda_i$ and $w_i$ be zero
in~\eqref{e31}, and it leads to the following upper bound on $D^*_{\Delta}$:
\begin{align}
  \max_{\xi\in D^*_{\Delta}}\|\xi\| \leq N\max_{i\in
    V}\frac{f_i(\bar{z})-\hat{Q}_i(0,0,0)}
  {\beta(\bar{z})},\label{e32}
\end{align}
where $\hat{Q}_i(0,0,0) = \inf_{x_i\in X}f_i(x_i)$ and it can be
computed locally. We denote
\begin{align}\gamma_i(\bar{z}) : =
  \frac{f_i(\bar{z})-\hat{Q}_i(0,0,0)}{\beta(\bar{z})}.\label{e12}
\end{align}

Since $f_i$ and $g$ are continuous and $X$ is compact, then that $Q_i$
is continuous; e.g., see Theorem 1.4.16
in~\cite{JPA-HF:90}. Similarly, $Q$ is continuous. Since
$D^*_{\Delta}$ is also bounded, then we have that
$D^*_{\Delta}\neq\emptyset$.

\begin{remark} The requirement of exact agreement on $z$ in the
  problem $P$ is slightly relaxed in the problem $P_{\Delta}$ by
  introducing a small positive scalar $\delta$. In this way, the
  global dual function $Q$ is a sum of the local dual functions $Q_i$,
  as in~\eqref{e11}; $D^*_{\Delta}$ is non-empty and uniformly
  bounded. These two properties play important roles in the devise of
  our subsequent algorithm.\oprocend\label{rem4}
\end{remark}

\subsection{Other notation}

Define the set-valued map $\Omega_i : \Xi'\rightarrow 2^{X}$ as
$\Omega_i(\xi_i) := {\rm argmin}_{x_i\in X}\LL_i(x_i,\xi_i)$; i.e.,
given $\xi_i$, the set $\Omega_i(\xi_i)$ is the collection of
solutions to the following local optimization problem:
\begin{align}
  \min_{x_i\in X}\LL_i(x_i,\xi_i).\label{e39}
\end{align}
Here, $\Omega_i$ is referred to as the \emph{marginal map} of agent
$i$. Since $X$ is compact and $f_i$, $g$ are continuous, then
$\Omega_i(\xi_i)\neq \emptyset$ in~\eqref{e39} for any
$\xi_i\in\Xi'$. In the algorithm we will develop in next section, each
agent is required to obtain \emph{one} (globally) optimal solution and
the optimal value the local optimization problem~\eqref{e39} at each
iterate. We assume that this can be easily solved, and this is the
case for problems of $n=1$, or $f_i$ and $g$ being smooth (the
extremum candidates are the critical points of the objective function
and isolated corners of the boundaries of the constraint regions) or
having some specific structure which allows the use of global
optimization methods such as branch and bound algorithms.

In the space $\real^n$, we define the distance between a point
$z\in\real^n$ to a set $A\subset\real^n$ as $\dist(z,A) := \inf_{y\in
  A}\|z-y\|$, and the Hausdorff distance between two sets
$A,B\subset\real^n$ as $\dist(A,B) := \max\{\sup_{z\in A}\dist(z,B),
\sup_{y\in B}\dist(A,y)\}$. We denote by $B_{\mathcal{U}}(A,r) :=
\{u\in {\mathcal{U}} \; | \; \dist(u,A) \leq r\}$ and
$B_{2^{\mathcal{U}}}(A,r) := \{U\in 2^{\mathcal{U}} \; | \; \dist(U,A)
\leq r\}$ where ${\mathcal{U}}\subset\real^n$.

\section{Distributed approximate dual subgradient
  algorithm}\label{sec:algorithm}

In this section, we devise a distributed approximate dual subgradient
algorithm which aims to find a pair of primal-dual solutions to the
approximate problem ($P_{\Delta}$). %Its convergence properties are
%also summarized.

For each agent $i$, let $x_i(k)\in\real^n$ be the estimate of the
primal solution $x_i$ to the approximate problem ($P_\Delta$) at time
$k \ge 0$, $\mu_i(k)\in\real^m_{\geq0}$ be the estimate of the
multiplier on the inequality constraint $g(x_i) \leq 0$,
$\lambda^i(k)\in\real^{nN}_{\geq0}$ (resp.
$w^i(k)\in\real^{nN}_{\geq0}$)\footnote{We will use the superscript
  $i$ to indicate that $\lambda^i(k)$ and $w^i(k)$ are estimates of
  some global variables.} be the estimate of the multiplier associated
with the collection of the local inequality constraints $- x_j +
x_{j_D} - \Delta \leq 0$ (resp. $x_j - x_{j_D} - \Delta \leq 0$), for
all $j\in V$. We let $\xi_i(k) :=
(\mu_i(k)^T,\lambda^i(k)^T,w^i(k)^T)^T\in \Xi'$, for $i\in V$ to be
the collection of dual estimates of agent~$i$. And denote $v_i(k) :=
(\mu_i(k)^T,v^i_{\lambda}(k)^T,v^i_w(k)^T)^T\in \Xi'$ where
$v^i_{\lambda}(k) := \sum_{j\in V}a^i_j(k)
\lambda^j(k)\in\real^{nN}_{\geq0}$ and $v^i_w(k) := \sum_{j\in
  V}a^i_j(k) w^j(k)\in \real^{nN}_{\geq0}$ are convex combinations of
dual estimates of agent~$i$ and its neighbors at time~$k$.

At time $k$, we associate each agent~$i$ a supergradient vector
$\DD_i(k)$ defined as\\ $\DD_i(k) :=
(\DD^i_{\mu}(k)^T,\DD^i_{\lambda}(k)^T,\DD^i_w(k)^T)^T$, where
$\DD^i_{\mu}(k) := g(x_i(k))\in\real^m$, $\DD^i_\lambda(k)$ has
components $\DD^i_{\lambda}(k)_i := -\Delta - x_i(k)\in \real^{n}$,
$\DD^i_{\lambda}(k)_{i_U} := x_i(k)\in \real^n$, and
$\DD^i_{\lambda}(k)_j = 0\in \real^n$ for $j\in V\setminus\{i,i_U\}$,
while the components of $\DD^i_w(k)$ are given by: $\DD^i_w(k)_i := -
\Delta + x_i(k)\in \real^{n}$, $\DD^i_w(k)_{i_U} := -x_i(k)\in
\real^{n}$, and $\DD^i_w(k)_j = 0\in \real^{n}$, for $j\in
V\setminus\{i,i_U\}$. For each agent~$i$, we define the set $M_i :=
\{\xi_i\in\Xi' \; | \; \|\xi_i\|\leq \gamma + \theta_i\}$ for some
$\theta_i > 0$ where $\gamma := N\max_{i\in V}\gamma_i(\bar{z})$. Let
$P_{M_i}$ to be the projection onto the set $M_i$. It is easy to check
that $M_i$ is closed and convex, and thus the projection map $P_{M_i}$
is well-defined.

The \emph{Distributed Approximate Dual Subgradient (DADS)} Algorithm
is described in Table~\ref{ta:DADS}.

% \begin{algorithm}[htbp]\caption{The Algorithm for
%     group~$i$} \label{algo}
%\begin{algorithmic}[1]
%  Initially, all the agents agree upon some $\delta>0$ in the
%  approximate problem ($P_{\Delta}$). Each agent $i$ chooses a common
%  Slater vector $\bar{z}$, computes $\gamma_i(\bar{z})$ and obtains
%  $\gamma := N\max_{i\in V}\gamma_i(\bar{z})$ through a max-consensus
%  algorithm where $\gamma_i(\bar{z})$ is given in~\eqref{e12}. After
%  that, each agent $i$ chooses initial states $x_i(0)\in X$ and
%  $\xi_i(0)\in \Xi'$.
%
%At each time~$k$, each agent $i$ updates $x_i(k)$ and $\xi_i(k)$ as follows:
%
%\underline{Step 1.} For each $k\geq1$, given $v_i(k)$, solve the local optimization problem~\eqref{e39}, obtain a solution $x_i(k)\in\Omega_i(v_i(k))$
%and the dual optimal value $Q_i(v_i(k))$.
%
%\underline{Step 2.}  For each $k\geq0$, generate the dual estimate
%$\xi_i(k+1)$ according to the following rule:
%\begin{align}
%  \xi_i(k+1) = P_{M_i}[v_i(k) + \alpha(k) \DD_i(k)],\label{e48}
%\end{align}
%where the scalar $\alpha(k)\geq0$ is a step-size.
%\end{algorithmic}
%\end{algorithm}

\begin{algorithm}[htbp]
  \caption{The Distributed Approximate Dual Subgradient Algorithm} \label{ta:DADS}

\begin{algorithmic}

  \REQUIRE Initially, all the agents agree upon some $\delta>0$ in the
  approximate problem ($P_{\Delta}$). Each agent $i$ chooses a common
  Slater vector $\bar{z}$, computes $\gamma_i(\bar{z})$ and obtains
  $ \gamma = N\max_{i\in V}\gamma_i(\bar{z})$ through a max-consensus
  algorithm where $\gamma_i(\bar{z})$ is given in~\eqref{e12}. After
  that, each agent $i$ chooses initial states $x_i(0)\in X$ and
  $\xi_i(0)\in \Xi'$.  \ENSURE At each time~$k$, each agent $i$
  executes the following steps:
\end{algorithmic}

\begin{algorithmic}[1]

  \STATE For each $k\geq1$, given $v_i(k)$, solve the local
  optimization problem~\eqref{e39}, obtain a solution
  $x_i(k)\in\Omega_i(v_i(k))$ and the dual optimal value
  $Q_i(v_i(k))$.

  \STATE For each $k\geq0$, generate the dual estimate $\xi_i(k+1)$
  according to the following rule: \begin{align} \xi_i(k+1) =
    P_{M_i}[v_i(k) + \alpha(k) \DD_i(k)],\label{e48}\end{align} where
  the scalar $\alpha(k)\geq0$ is a step-size.

\STATE Repeat for $k = k+1$.
\end{algorithmic}
\end{algorithm}

\begin{remark} The DADS algorithm is an extension of the classical
  dual algorithm, e.g., in~\cite{BTP:67} and~\cite{DPB:09} to the
  multi-agent setting and nonconvex case. In the initialization of
  the DADS algorithm, the value $\gamma$ serves as
  an upper bound on $D_{\Delta}^*$. In Step~$1$, \emph{one solution}
  in $\Omega_i(v_i(k))$ is needed, and it is unnecessary to compute
  the whole set $\Omega_i(v_i(k))$. \oprocend\label{rem3}
\end{remark}

In order to assure the primal convergence, we will assume that the
dual estimates converge to the set where each has a single optimal
solution.
\begin{definition}[Singleton optimal dual solution set] The set of
  $D_s^*\subseteq \real^{(m+2n)N}$ is the collection of $\xi$ such
  that the set $\Omega_i(\xi_i)$ is a singleton, where $\xi_i =
  (\mu_i,\lambda,w)$ for each $i\in V$. \oprocend \label{def1}
\end{definition}

%\begin{assumption}[Singleton optimal dual solution sets] Given any
%  dual optimal solution $\xi^*\in D_{\Delta}^*$, the set of
%  $\Omega_i(\xi_i^*)$ is singleton where $\xi_i^* =
%  (\mu_i^*,\lambda^*,w^*)$ for each $i\in V$. \label{asm11}
%\end{assumption}

%Recall that $\gamma$ provides an upper bound of $D_{\Delta}^*$. Then
%it suffices to verify that $\Omega_i(\xi_i)$ is singleton for any
%$\|\xi_i\|\leq\gamma$. The above assumption is easy to check when
%$x_i$ is a scalar. Furthermore, it follows from the second equality
%in~\eqref{e11} that Assumption~\ref{asm11} is equivalent to the
%following:
%
%\begin{assumption} Given any $\xi^*\in D_{\Delta}^*$, there is a
%  unique solution to $\displaystyle{\min_{x\in
%      X^N}\LL(x,\xi^*)}$. \label{asm12}
%\end{assumption}

The primal and dual estimates in the DADS algorithm will be shown to
asymptotically converge to a pair of primal-dual solutions to the
approximate problem ($P_{\Delta}$). We formally state this in the
following theorem:

\begin{theorem}[Convergence properties of the DADS algorithm] Consider
  the problem ($P$) and the corresponding approximate problem
  ($P_{\Delta}$) with some $\delta > 0$. We let the non-degeneracy
  assumption~\ref{asm20}, the balanced communication
  assumption~\ref{asm30} and the periodic strong connectivity
  assumption~\ref{asm10} hold. In addition, suppose the Slater's
  condition~\ref{asm5} holds for the problem ($P$). Consider the dual
  sequences of $\{\mu_i(k)\}$, $\{\lambda^i(k)\}$, $\{w^i(k)\}$ and
  the primal sequence of $\{x_i(k)\}$ of the distributed approximate
  dual subgradient algorithm with $\{\alpha(k)\}$ satisfying
  $\displaystyle{\lim_{k\rightarrow+\infty}\alpha(k) = 0}$,
  $\displaystyle{\sum_{k=0}^{+\infty}\alpha(k) = +\infty}$,
  $\displaystyle{\sum_{k=0}^{+\infty}\alpha(k)^2 < +\infty}$.
  \begin{enumerate}
  \item (Dual estimate convergence) There exists a dual solution
    $\xi^*\in D_{\Delta}^*$ where $\xi^* := (\mu^*,\lambda^*,w^*)$ and
    $\mu^*:=(\mu_i^*)$ such that the following holds for all $i\in V$:
  \begin{align*}{\lim_{k\rightarrow+\infty}\|\mu_i(k) - \mu_i^*\|
    = 0},\quad{\lim_{k\rightarrow+\infty}\|\lambda^i(k) -
    \lambda^*\| = 0},\quad{\lim_{k\rightarrow+\infty}\|w^i(k) - w^*\| =
    0}.\end{align*}
\item (Primal estimate convergence) If the dual solution satisfies
  $\xi^*\in D_s^*$, i.e.~$\Omega_i(\xi^*_i)$ is a singleton for all
  $i\in V$, then there is $x^*\in X_{\Delta}^*$ with $x^*:=(x_i^*)$
  such that, for all $i\in V$:
  \begin{align*}{\lim_{k\rightarrow+\infty}\|x_i(k) - x_i^*\| =
    0}.\end{align*}
    \end{enumerate}\label{the2}
\end{theorem}

\section{Discussion}

Before proceeding with the technical proofs for Theorem~\ref{the2}, we
would like to make the following observations. First, our methodology
is motivated by the need of solving a nonconvex problem in a
distributed way by a group of agents whose interactions change with
time. This places a number of restrictions on the type of solutions
that one can find. Time-varying interactions of anonymous agents can
be currently solved via agreement algorithms; however these are
inherently convex operations, which does not work well in nonconvex
settings. To overcome this, one can resort to dualization. Admittedly,
zero duality gap does not hold in general for nonconvex problems. A
possibility would be to resort to nonlinear augmented Lagrangians, for
which strong duality holds in a broad class of
programs~\cite{RSB:11,RSB-CYK:07,RTR-RJBW:98}. However, we find here
another problem, as a distributed solution using agreement requires
separability, as the one ensured by the linear Lagrangians we use
here. Thus, we have looked for alternative assumptions that can be
easier to check and allow the dualization approach to work.

More precisely, Theorem~\ref{the2} shows that dual estimates always
converge to a dual optimal solution. The convergence of primal
estimates requires an additional assumption that the dual limit has a
single optimal solution. Let us refer to this assumption as \emph{the
  singleton dual optimal solution set} (SD for short). This assumption
may not be easy to check \emph{a priori}, however it is of similar
nature as existing algorithms for nonconvex
optimization. In~\cite{RSB:11} and~\cite{RSB-CYK:07}, subgradient methods are defined in terms of (nonlinear) augmented Lagrangians, and it is shown that every accumulation point of the
primal sequence is a primal solution provided that the dual function
is required to be differentiable at the dual limit. An open question
is how to resolve the above issues imposed by the multi-agent setting
with less stringent conditions on the nature of the nonconvex
optimization problem.

In the following, we study a class of nonconvex quadratic programs
for which a sufficient condition guarantees that the SD assumption
holds. Nonconvex quadratic programs hold great importance from both
theoretic and practical aspects. In general, nonconvex quadratic
programs are NP-hard, and please refer to~\cite{PMP-SAV:91} for detailed
discussion. The aforementioned sufficient condition only requires checking
the positive definiteness of a matrix.

Consider the following nonconvex quadratic program:
\begin{align}
  &\min_{z\in \cap_{i\in V}X_i} f(z) = \sum_{i\in V}f_i(z) = \sum_{i\in V}\big(\|z\|^2_{P_i} + 2\langle q_i, z\rangle\big),\nnum\\
  &{\rm s.t.}\quad \|z\|^2_{A_{i,\ell_i}} + 2\langle
  b_{i,\ell_i}, z\rangle + c_{i,\ell_i} \leq 0, \quad \ell_i =
  1,\cdots,m_i,\label{e1}
\end{align}
where $\|z\|^2_{A_{i,\ell_i}} \triangleq z^T A_{i,\ell_i} z$ and ${A_{i,\ell_i}}$ are real and
symmetric matrices. The approximate problem of $P_{\Delta}$ is given by \begin{align}
  &\min_{x\in\real^{2N}} \sum_{i\in V}f_i(x_i) = \sum_{i\in V}\big(\|x_i\|^2_{P_i} + 2\langle q_i, x_i\rangle\big),\nnum\\
  &{\rm s.t.}\quad \|x_i\|^2_{A_{i,\ell_i}} + 2\langle
  b_{i,\ell_i}, x_i\rangle + c_{i,\ell_i} \leq 0, \quad \ell_i =
  1,\cdots,m_i,\nnum\\
  &\quad\quad - x_i + x_{i_D} - \Delta \leq 0,\quad
  x_i - x_{i_D} - \Delta \leq 0,\quad x_i\in X_i,\quad i\in V.\label{e2}
\end{align}

%\begin{remark}
%  Note that the additional inequality constraints in ($P_{\Delta}$)
%  are linear. If $(P)$ is the form of~\eqref{e1}, so is
%  ($P_{\Delta}$). \oprocend
%\end{remark}

We introduce the dual multipliers $(\mu,\lambda,w)$ as before. The local Lagrangian function $\LL_i$ can be written as follows:
\begin{align}
  \LL_i(x_i,\xi_i) \triangleq \|x_i\|^2_{P_i + \sum_{\ell_i=1}^{m_i}
    \mu_{i,\ell_i}A_{i,\ell_i}} + \langle \zeta_i,x_i\rangle,\nnum
\end{align} where the term independent of $x_i$ is dropped and
$\zeta_i$ is a linear function of $\xi_i = (\mu_i,\lambda,w)$. The
dual function and dual problem can be defined as before. Consider any
dual optimal solution $\xi^*$. If for all $i\in V$:

(P1) $P_i + \sum_{\ell_i=1}^{m_i}\mu_{i,\ell_i}^*A_{i,\ell_i}$ is positive
definite;

(P2) $x^*_i = \big(P_i + \sum_{\ell_i=1}^{m_i}
    \mu_{i,\ell_i}^*A_{i,\ell_i}\big)^{-1}\zeta_i^*\in X_i$;

\noindent then the SD assumption holds. The properties (P1) and (P2)
are easy to verify in a distributed way once a dual solution $\xi^*$
is obtained. We would like to remark that (P1) is used
in~\cite{DYG-NR-HS:09} to determine the unique global optimal solution
via canonical duality when $X$ is absent.

\section{Convergence analysis}\label{sec:analysis}

This section provides a guide to the complete analysis of
Theorem~\ref{the2}.  Recall that $g$ is continuous and $X$ is
compact. Then there are $G, H > 0$ such that $\|g(x)\| \leq G$ and
$\|x\| \leq H$ for all $x\in X$. We start our analysis from the
computation of supergradients of $Q_i$. Due to space reasons, we will
omit most technical proofs; these can be found in the enlarged
version~\cite{MZ-SM:10-arxiv}.

\begin{lemma}[Supergradient computation]
  If $\bar{x}_i\in \Omega_i(\bar{\xi}_i)$, then
  $\big(g(\bar{x}_i)^T, (-\Delta - \bar{x}_i)^T, \bar{x}_i^T,
  (\bar{x}_i-\Delta)^T, -\bar{x}_i^T)^T$ is a supergradient
  of $Q_i$ at $\bar{\xi}_i$; i.e., the following holds for any
  $\xi_i\in\Xi'$:
  \begin{align}
    Q_i(\xi_i) - Q_i(\bar{\xi}_i)&\leq \langle g(\bar{x}_i),\mu_i -
    \bar{\mu}_i\rangle + \langle -\Delta - \bar{x}_i, \lambda_i -
    \bar{\lambda}_i\rangle\nnum\\ &+ \langle \bar{x}_i, \lambda_{i_U}
    - \bar{\lambda}_{i_U}\rangle + \langle\bar{x}_i-\Delta,w_i -
    \bar{w}_i\rangle+ \langle-\bar{x}_i, w_{i_U} -
    \bar{w}_{i_U}\rangle.\label{e16}\end{align} \label{lem2}
\end{lemma}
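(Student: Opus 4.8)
The plan is to prove the claimed approximate supgradient inequality directly from the two defining properties of $Q_i$ and of the approximate marginal map, exploiting the fact that $\LL_i(x_i,\xi_i)$ is affine in the dual variable $\xi_i$ for each fixed $x_i$. Recall that $Q_i(\xi_i) = \inf_{x_i\in X}\LL_i(x_i,\xi_i)$, so $Q_i$ is an infimum of functions affine in $\xi_i$ and hence concave; the vector exhibited in the statement is to be read as a supgradient with respect to the blocks $(\mu_i,\lambda_i,\lambda_{i_U},w_i,w_{i_U})$ on which $\LL_i$ actually depends.

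First I would establish two one-sided estimates. On the one hand, evaluating the infimum at the particular point $\bar{x}_i$ gives the upper bound $Q_i(\xi_i)\le \LL_i(\bar{x}_i,\xi_i)$ for every $\xi_i\in\Xi_i$. On the other hand, the hypothesis $\bar{x}_i\in\Omega_i^{\epsilon}(\bar{\xi}_i)$ means precisely that $\LL_i(\bar{x}_i,\bar{\xi}_i)\le Q_i(\bar{\xi}_i)+\epsilon$, i.e. $Q_i(\bar{\xi}_i)\ge \LL_i(\bar{x}_i,\bar{\xi}_i)-\epsilon$. Subtracting the second estimate from the first yields
\begin{align*}
Q_i(\xi_i)-Q_i(\bar{\xi}_i)\le \LL_i(\bar{x}_i,\xi_i)-\LL_i(\bar{x}_i,\bar{\xi}_i)+\epsilon,
\end{align*}
so the whole problem reduces to computing the affine difference on the right with the state variable frozen at $\bar{x}_i$.

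The second step is the algebraic bookkeeping. Since $f_i(\bar{x}_i)$ does not depend on the dual variable, it cancels in the difference, and each remaining term of $\LL_i$ is linear in $(\mu_i,\lambda_i,\lambda_{i_U},w_i,w_{i_U})$. Collecting coefficients block by block, the $\mu_i$-block contributes $\langle g(\bar{x}_i),\mu_i-\bar{\mu}_i\rangle$; the $\lambda_i$-contributions $\langle-\lambda_i,\bar{x}_i\rangle$ and $-\langle\lambda_i,\Delta\rangle$ combine into $\langle -\Delta-\bar{x}_i,\lambda_i-\bar{\lambda}_i\rangle$; the $w_i$-contributions $\langle w_i,\bar{x}_i\rangle$ and $-\langle w_i,\Delta\rangle$ combine into $\langle \bar{x}_i-\Delta,w_i-\bar{w}_i\rangle$; and the neighbor terms produce $\langle\bar{x}_i,\lambda_{i_U}-\bar{\lambda}_{i_U}\rangle$ and $\langle-\bar{x}_i,w_{i_U}-\bar{w}_{i_U}\rangle$. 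Substituting these back reproduces exactly the right-hand side of~\eqref{e16}, completing the argument.

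I do not expect a genuine obstacle here: the only subtlety is keeping the five blocks and the signs straight when grouping the $\Delta$-terms with the corresponding $\bar{x}_i$-terms, and noting that the $\epsilon$ enters solely through the approximate-optimality lower bound on $Q_i(\bar{\xi}_i)$ while the affine part is handled exactly. The conceptual content is simply that an exact minimizer would give a true supgradient of the concave dual $Q_i$, and an $\epsilon$-approximate minimizer degrades this to an $\epsilon$-approximate supgradient.
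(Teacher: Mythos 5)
Your proof is correct and is precisely the standard dual-supgradient computation that the paper invokes (the paper omits the proof, citing the analogous argument in Bertsekas et al.): the inequality $Q_i(\xi_i)\le\LL_i(\bar{x}_i,\xi_i)$ from the infimum, the $\epsilon$-approximate optimality bound $Q_i(\bar{\xi}_i)\ge\LL_i(\bar{x}_i,\bar{\xi}_i)-\epsilon$, and the block-by-block affine difference, whose signs you have grouped correctly. Nothing is missing.
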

\begin{proof}
  The proof is based on the computation of dual subgradients, e.g.,
  in~\cite{DPB:09,DPB-AN-AO:03a}.
\end{proof}
A direct result of Lemma~\ref{lem2} is that the vector $(g(x_i(k))^T,
(-\Delta - x_i(k))^T, x_i(k)^T, (x_i(k)-\Delta)^T, -x_i(k)^T)$ is a
supergradient of $Q_i$ at $v_i(k)$; i.e., the following supergradient
inequality holds for any $\xi_i\in \Xi'$:
\begin{align}
  &Q_i(\xi_i) - Q_i(v_i(k))\leq \langle g(x_i(k)),\mu_i -
  \mu_i(k)\rangle + \langle -\Delta - x_i(k), \lambda_i -
  v_{\lambda}^i(k)_i\rangle\nnum\\ &+ \langle x_i(k), \lambda_{i_U} -
  v_{\lambda}^i(k)_{i_U}\rangle + \langle x_i(k)-\Delta, w_i -
  v_w^i(k)_i\rangle + \langle -x_i(k),w_{i_U} -
  v_w^i(k)_{i_U}\rangle.\label{e13}
\end{align}

Now we can see that the update rule~\eqref{e48} of dual estimates in the DADS
algorithm is a combination of a dual subgradient scheme and average consensus algorithms. The following establishes that $Q_i$ is Lipschitz continuous with some Lipschitz constant $L$.

\begin{lemma}[Lipschitz continuity of $Q_i$]
  There is a constant $L > 0$ such that for any $\xi_i, \bar{\xi}_i
  \in \Xi'$, it holds that $\|Q_i(\xi_i) -
    Q_i(\bar{\xi}_i)\| \leq L \|\xi_i -
    \bar{\xi}_i\|$.\label{lem6}
\end{lemma}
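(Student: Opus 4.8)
The plan is to exploit the concavity of $Q_i$ together with a uniform bound on its supgradients. Since for each fixed $x_i \in X$ the map $\xi_i \mapsto \LL_i(x_i,\xi_i)$ is affine, $Q_i$ is a pointwise infimum of affine functions and is therefore concave on $\Xi_i$. Because $X$ is compact and $\LL_i(\cdot,\xi_i)$ is continuous, the infimum defining $Q_i(\bar{\xi}_i)$ is attained, so $\Omega_i(\bar{\xi}_i)\neq\emptyset$ for every $\bar{\xi}_i\in\Xi_i$. First I would take any exact minimizer $\bar{x}_i\in\Omega_i(\bar{\xi}_i)$. Specializing the supgradient computation of Lemma~\ref{lem2} to an exact minimizer (equivalently, setting $\epsilon=0$, so that $\Omega_i^{0}(\bar{\xi}_i)$ coincides with $\Omega_i(\bar{\xi}_i)$) removes the $\epsilon$ slack and yields the exact supgradient inequality
\begin{align*}
  Q_i(\xi_i) - Q_i(\bar{\xi}_i) \leq \langle s(\bar{x}_i),\, \xi_i - \bar{\xi}_i\rangle, \qquad \forall\, \xi_i\in\Xi_i,
\end{align*}
where $s(\bar{x}_i)$ is the vector whose only nonzero blocks, in the $\mu_i,\lambda_i,\lambda_{i_U},w_i,w_{i_U}$ coordinates, are $g(\bar{x}_i)$, $-\Delta-\bar{x}_i$, $\bar{x}_i$, $\bar{x}_i-\Delta$, and $-\bar{x}_i$ respectively.

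The key estimate is a \emph{uniform} bound on $\|s(\bar{x}_i)\|$, independent of the particular minimizer and of the agent. Using the constants introduced at the start of this section, $\|g(\bar{x}_i)\|\leq G$ and $\|\bar{x}_i\|\leq H$ for every $\bar{x}_i\in X$, together with $\|\Delta\| = \delta\sqrt{n}$, I would bound each block separately to get
\begin{align*}
  \|s(\bar{x}_i)\| \leq \sqrt{G^2 + 2H^2 + 2(H+\delta\sqrt{n})^2} \;=:\; L.
\end{align*}
Applying Cauchy--Schwarz to the supgradient inequality then gives $Q_i(\xi_i)-Q_i(\bar{\xi}_i)\leq L\|\xi_i-\bar{\xi}_i\|$. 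Exchanging the roles of $\xi_i$ and $\bar{\xi}_i$ and using a minimizer at $\xi_i$ produces the reverse inequality with the same constant, so that $|Q_i(\xi_i)-Q_i(\bar{\xi}_i)|\leq L\|\xi_i-\bar{\xi}_i\|$, which is the claim.

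There is no substantial obstacle here; the argument is routine once Lemma~\ref{lem2} is available, and the main point requiring care is purely bookkeeping. The supgradient $s(\bar{x}_i)$ pairs only with the five coordinate blocks $\mu_i,\lambda_i,\lambda_{i_U},w_i,w_{i_U}$ of the full difference vector $\xi_i-\bar{\xi}_i$; since this sub-vector has Euclidean norm at most $\|\xi_i-\bar{\xi}_i\|$, the Cauchy--Schwarz step is legitimate and, crucially, the constant $L$ acquires no dependence on the network size $N$. Finally I would note that $L$ depends only on the global data $G$, $H$, $\delta$, and $n$ — all common to the agents, since $g$ and $X$ are shared — so a single $L$ works for every $i\in V$, exactly as the statement requires.
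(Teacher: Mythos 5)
Your proposal is correct and follows essentially the same route as the paper: take an exact minimizer $\bar{x}_i\in\Omega_i(\bar{\xi}_i)$, invoke the exact ($\epsilon=0$) version of the supgradient inequality from Lemma~\ref{lem2}, bound the supgradient uniformly via $G$ and $H$, and symmetrize to get the two-sided estimate. The only difference is that you make the constant $L$ and the Cauchy--Schwarz bookkeeping explicit, which the paper leaves implicit.
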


\begin{proof}
  Similarly to Lemma~\ref{lem2}, one can show that if $\bar{x}_i\in
  \Omega_i(\bar{\xi}_i)$, then $(g(\bar{x}_i)^T, (-\Delta -
  \bar{x}_i)^T, \bar{x}_i^T, (\bar{x}_i-\Delta)^T, -\bar{x}_i^T)^T$ is
  a supergradient of $Q_i$ at $\bar{\xi}_i$; i.e., the following holds
  for any $\xi_i\in\Xi'$:
  \begin{align}
    &Q_i(\xi_i) - Q_i(\bar{\xi}_i)\leq \langle g(\bar{x}_i),\mu_i - \bar{\mu}_i\rangle + \langle -\Delta - \bar{x}_i, \lambda_i - \bar{\lambda}_i\rangle\nnum\\
    &+ \langle \bar{x}_i, \lambda_{i_U} - \bar{\lambda}_{i_U}\rangle + \langle\bar{x}_i-\Delta,w_i - \bar{w}_i\rangle + \langle-\bar{x}_i,w_{i_U} -
    \bar{w}_{i_U}\rangle.\nnum
  \end{align}
  Since $\|g(\bar{x}_i)\| \leq G$ and $\|\bar{x}_i\| \leq H$, there is
  $L > 0$ such that $Q_i(\xi_i) - Q_i(\bar{\xi}_i) \leq L \|\xi_i -
  \bar{\xi}_i\|$. Similarly, $Q_i(\bar{\xi}_i) - Q_i(\xi_i) \leq L
  \|\xi_i - \bar{\xi}_i\|$. We then reach the desired result.
\end{proof}

In the DADS algorithm, the error induced by the projection map $P_{M_i}$ is
given by:
\begin{align}
  &e_i(k) := P_{M_i}[v_i(k) + \alpha(k) \DD_i(k)] - v_i(k).\nnum
\end{align}
We next provide a basic iterate relation of dual estimates in the DADS algorithm.

\begin{lemma} [Basic iterate relation]
  Under the assumptions in Theorem~\ref{the2}, for any
  $((\mu_i),\lambda,w)\in \Xi$ with $(\mu_i,\lambda,w)\in M_i$ for all
  $i\in V$, the following estimate holds for all $k\geq 0$:
  \begin{align}
  &\sum_{i\in V}\|e_i(k) - \alpha(k){\DD}_i(k)\|^2\leq
    \alpha(k)^2\sum_{i\in V}\|{\DD}_i(k)\|^2+\sum_{i\in
      V}(\|\xi_i(k)-\xi_i\|^2 -
    \|\xi_i(k+1)-\xi_i\|^2)\nnum\\ &+2\alpha(k)\sum_{i\in V}\{\langle
    g(x_i(k)),\mu_i(k)-\mu_i\rangle +\langle -\Delta - x_i(k),
    v_{\lambda}^i(k)_i-\lambda_i\rangle\nnum\\
 &+\langle x_i(k),
    v_{\lambda}^i(k)_{i_U}-\lambda_{i_U}\rangle+\langle x_i(k)-\Delta,
    v_w^i(k)_i-w_i\rangle + \langle -x_i(k),
    v_w^i(k)_{i_U}-w_{i_U}\rangle\}.\label{e26}
\end{align}
\label{lem4}\end{lemma}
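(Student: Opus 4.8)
The plan is to derive the estimate from the standard Euclidean projection inequality applied agent by agent, then sum over agents, invoking the balanced communication assumption~\ref{asm30} to absorb the consensus averaging. First I would recall that since $M_i$ is closed and convex (see Remark~\ref{rem3}), for any point $u$ and any $\xi_i\in M_i$ the projection satisfies $\|P_{M_i}[u]-\xi_i\|^2 \leq \|u-\xi_i\|^2 - \|u-P_{M_i}[u]\|^2$. I would apply this with $u = v_i(k)+\alpha(k)\DD_i(k)$ and with the target point $\xi_i=(\mu_i,\lambda,w)\in M_i$, which is why the hypothesis $(\mu_i,\lambda,w)\in M_i$ is imposed. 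Since $\xi_i(k+1)=P_{M_i}[u]$ and $e_i(k)=\xi_i(k+1)-v_i(k)$, the residual $u-P_{M_i}[u]$ equals $-(e_i(k)-\alpha(k)\DD_i(k))$, so the inequality rearranges into
\begin{align*}
\|e_i(k)-\alpha(k)\DD_i(k)\|^2 \leq \|v_i(k)+\alpha(k)\DD_i(k)-\xi_i\|^2 - \|\xi_i(k+1)-\xi_i\|^2.
\end{align*}
Expanding the first square into $\|v_i(k)-\xi_i\|^2 + 2\alpha(k)\langle\DD_i(k),v_i(k)-\xi_i\rangle + \alpha(k)^2\|\DD_i(k)\|^2$ and summing over $i\in V$ would then produce the three groups of terms in~\eqref{e26}, once two facts are verified.

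The inner-product term is pure bookkeeping. I would split $\langle\DD_i(k),v_i(k)-\xi_i\rangle$ according to the block structure of $\DD_i(k)$. Because $\DD^i_{\lambda}(k)$ and $\DD^i_w(k)$ are supported only on their $i$ and $i_U$ components (with values $-\Delta-x_i(k)$, $x_i(k)$ and $x_i(k)-\Delta$, $-x_i(k)$ respectively, all other blocks vanishing), the inner product collapses exactly to the five pairings $\langle g(x_i(k)),\mu_i(k)-\mu_i\rangle$, $\langle -\Delta-x_i(k),v_{\lambda}^i(k)_i-\lambda_i\rangle$, $\langle x_i(k),v_{\lambda}^i(k)_{i_U}-\lambda_{i_U}\rangle$, $\langle x_i(k)-\Delta,v_w^i(k)_i-w_i\rangle$, and $\langle -x_i(k),v_w^i(k)_{i_U}-w_{i_U}\rangle$ that appear inside the braces in~\eqref{e26}.

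The key step, and the place where the network assumptions enter, is the inequality $\sum_{i\in V}\|v_i(k)-\xi_i\|^2 \leq \sum_{i\in V}\|\xi_i(k)-\xi_i\|^2$, which replaces the consensus-averaged $v_i(k)$ by the raw iterate $\xi_i(k)$; this is what turns the bound into the telescoping form needed for Theorem~\ref{the2}. The $\mu$-block passes through unchanged since $v_i(k)$ and $\xi_i(k)$ share the component $\mu_i(k)$. For the $\lambda$-block, using row-stochasticity $\sum_{j\in V}a^i_j(k)=1$ I would write $v_{\lambda}^i(k)-\lambda = \sum_{j\in V}a^i_j(k)(\lambda^j(k)-\lambda)$, apply convexity of $\|\cdot\|^2$ to get $\|v_{\lambda}^i(k)-\lambda\|^2 \leq \sum_{j\in V}a^i_j(k)\|\lambda^j(k)-\lambda\|^2$, and then sum over $i$, using column-stochasticity $\sum_{i\in V}a^i_j(k)=1$ to telescope the double sum down to $\sum_{j\in V}\|\lambda^j(k)-\lambda\|^2$; the $w$-block is identical, and the shared target values $\lambda,w$ across agents are exactly what makes this work. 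I expect this double-stochasticity argument to be the crux of the proof, since it is precisely where Assumption~\ref{asm30} is indispensable, whereas everything else reduces to projection calculus and index bookkeeping.
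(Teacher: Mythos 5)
Your proposal is correct and follows essentially the same route as the paper, whose proof is simply the one-line remark that the result is an application of the projection nonexpansion inequality (Lemma~\ref{lem5}) to the closed convex sets $M_i$. You additionally spell out the details the paper leaves implicit — the block-wise expansion of the inner product and the double-stochasticity step converting $\sum_{i\in V}\|v_i(k)-\xi_i\|^2$ into $\sum_{i\in V}\|\xi_i(k)-\xi_i\|^2$ — and these details are accurate.
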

\begin{proof}
  Recall that $M_i$ is closed and convex. The proof is a combination
  of the nonexpansion property of projection operators
  in~\cite{DPB-AN-AO:03a} and the property of balanced graphs.
\end{proof}
The lemma below shows the asymptotic convergence of dual estimates.
\begin{lemma}[Dual estimate convergence] Under the assumptions in
  Theorem~\ref{the2}, there exists a dual optimal solution $\xi^* :=
  ((\mu_i^*),\lambda^*,w^*)\in D_{\Delta}^*$ such that
  $\displaystyle{\lim_{k\rightarrow+\infty}\|\mu_i(k) -
    \mu_i^*\| = 0}$,
  $\displaystyle{\lim_{k\rightarrow+\infty}\|\lambda^i(k) -
    \lambda^*\| = 0}$, and
  $\displaystyle{\lim_{k\rightarrow+\infty}\|w^i(k) - w^*\| =
    0}$.\label{lem1}
\end{lemma}

\begin{proof} By the dual decomposition property~\eqref{e11} and the
  boundedness of dual optimal solution sets, the dual problem
  $(D_{\Delta})$ is equivalent to the following:
  \begin{align}
    \max_{(\xi_i)} \sum_{i\in V}Q_i(\xi_i),\quad {\rm s.t.}\quad \xi_i
    \in M_i.\label{e15}
\end{align}
Note that $Q_i$ is affine and $M_i$ is convex, implying that the
problem~\eqref{e15} is a constrained convex programming where the
global objective function is a simple sum of local ones and the local
state constraints are convex and compact. The rest of the proofs can
be finished by following similar lines in~\cite{MZ-SM:09c}, and thus
omitted.\end{proof}

The remainder of this section is dedicated to characterizing the
convergence properties of primal estimates. Toward this end, we
present some properties of $\Omega_i$.

\begin{lemma}[Properties of marginal maps]
  The set-valued marginal map $\Omega_i$ is
  closed. In addition, it is upper semicontinuous at $\xi_i\in \Xi'$;
  i.e., for any $\epsilon' > 0$, there is $\delta' > 0$ such that for
  any $\tilde{\xi}_i\in B_{\Xi'}(\xi_i,\delta')$, it holds that
  $\Omega_i(\tilde{\xi}_i)\subset
  B_{2^X}(\Omega_i(\xi_i),\epsilon')$.\label{lem3}
\end{lemma}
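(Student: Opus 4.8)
The plan is to establish the two properties in sequence, letting the closedness of the graph of $\Omega_i^{\epsilon}$ carry most of the weight and then deriving upper semicontinuity from the compactness of $X$. Throughout I would lean on two continuity facts that are already at hand. First, the local Lagrangian $\LL_i(x_i,\xi_i)$ is \emph{jointly} continuous in $(x_i,\xi_i)$: the term $f_i(x_i)$ is continuous in $x_i$, the term $\langle\mu_i,g(x_i)\rangle$ is a bilinear pairing of the continuous map $g$ with $\mu_i$, the terms $\langle-\lambda_i+\lambda_{i_U}+w_i-w_{i_U},x_i\rangle$ are jointly continuous, and the remaining $\langle\lambda_i,\Delta\rangle$ and $\langle w_i,\Delta\rangle$ are affine in $\xi_i$. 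Second, $Q_i$ is continuous (indeed Lipschitz, by Lemma~\ref{lem6}), so the threshold $Q_i(\xi_i)+\epsilon$ in the definition of $\Omega_i^{\epsilon}$ moves continuously with $\xi_i$.

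For closedness I would argue sequentially. Take $\xi_i^{(n)}\to\bar{\xi}_i$ in $\Xi_i$ and $x^{(n)}\to\bar{x}$ with $x^{(n)}\in\Omega_i^{\epsilon}(\xi_i^{(n)})$ for every $n$. Since $X$ is closed and $x^{(n)}\in X$, the limit satisfies $\bar{x}\in X$. Each pair obeys $\LL_i(x^{(n)},\xi_i^{(n)})\leq Q_i(\xi_i^{(n)})+\epsilon$; passing to the limit, the joint continuity of $\LL_i$ on the left and the continuity of $Q_i$ on the right yield $\LL_i(\bar{x},\bar{\xi}_i)\leq Q_i(\bar{\xi}_i)+\epsilon$, i.e. $\bar{x}\in\Omega_i^{\epsilon}(\bar{\xi}_i)$. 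It is essential here that $\Omega_i^{\epsilon}$ is defined through a non-strict inequality, so that the inequality survives the limit.

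Upper semicontinuity at $\xi_i$ I would then obtain by contradiction, exploiting that every value of $\Omega_i^{\epsilon}$ lies in the fixed compact set $X$. Suppose the claim fails for some $\epsilon'>0$; then there are $\tilde{\xi}_i^{(n)}\to\xi_i$ and points $x^{(n)}\in\Omega_i^{\epsilon}(\tilde{\xi}_i^{(n)})$ with $\dist(x^{(n)},\Omega_i^{\epsilon}(\xi_i))>\epsilon'$ for all $n$. By compactness of $X$, a subsequence $x^{(n_k)}$ converges to some $\bar{x}\in X$. Applying the closedness property just proved, with $\xi_i^{(n_k)}:=\tilde{\xi}_i^{(n_k)}\to\xi_i$, gives $\bar{x}\in\Omega_i^{\epsilon}(\xi_i)$, so $\dist(x^{(n_k)},\Omega_i^{\epsilon}(\xi_i))\leq\|x^{(n_k)}-\bar{x}\|\to 0$, contradicting $\dist(x^{(n_k)},\Omega_i^{\epsilon}(\xi_i))>\epsilon'$. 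Hence the enlargement inclusion $\Omega_i^{\epsilon}(\tilde{\xi}_i)\subset B_{X}(\Omega_i^{\epsilon}(\xi_i),\epsilon')$ holds for all $\tilde{\xi}_i$ near $\xi_i$.

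Since these are standard limit-passing and compactness manipulations, I do not anticipate a serious obstacle. The two points requiring care are verifying the \emph{joint} continuity of $\LL_i$ (separate continuity in each variable would not justify the diagonal limit in the closedness step), and confirming that the range of $\Omega_i^{\epsilon}$ sits inside the fixed compact set $X$ — precisely the feature that upgrades closedness to upper semicontinuity and supplies the convergent subsequence used above.
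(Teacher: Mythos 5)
Your proof is correct. The closedness part is essentially identical to the paper's: both pass to the limit in the non-strict inequality $\LL_i(x^{(n)},\xi_i^{(n)})\leq Q_i(\xi_i^{(n)})+\epsilon$ using continuity of $\LL_i$ and of $Q_i$ (your explicit verification of \emph{joint} continuity of $\LL_i$ is a welcome detail the paper leaves implicit). Where you diverge is the upper semicontinuity step. The paper writes $\Omega_i^{\epsilon}(\xi_i)=\Omega_i^{\epsilon}(\xi_i)\cap X$ and invokes the abstract result quoted as Theorem~\ref{the6} (from Aubin--Frankowska): the intersection of a closed set-valued map with a compact-valued upper semicontinuous map is upper semicontinuous; it then specializes the neighborhood $\mathcal{U}$ to $B_{2^X}(\Omega_i^{\epsilon}(\xi_i),\epsilon')$. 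You instead give a direct contradiction argument --- extract a convergent subsequence from the violating points $x^{(n)}\in X$ and feed it back into the closedness property --- which is in effect a self-contained proof of that theorem in the special case at hand. The paper's route is shorter on the page because it offloads the work to a citation; yours is more elementary and makes visible exactly where compactness of $X$ is used, namely to supply the convergent subsequence that upgrades closedness of the graph to upper semicontinuity. Both arguments are sound, and your final inclusion $\Omega_i^{\epsilon}(\tilde{\xi}_i)\subset B_{X}(\Omega_i^{\epsilon}(\xi_i),\epsilon')$ is the intended (one-sided) reading of the lemma's conclusion.
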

\begin{proof} Consider sequences $\{x_i(k)\}$ and $\{\xi_i(k)\}$
  satisfying $\displaystyle{\lim_{k\rightarrow+\infty}\xi_i(k) =
    \bar{\xi}_i}$, $x_i(k)\in\Omega_i(\xi_i(k))$ and
  $\displaystyle{\lim_{k\rightarrow+\infty}x_i(k) = \bar{x}_i}$. Since
  $\LL_i$ is continuous, then we
  have \begin{align*}
    \LL_i(\bar{x}_i,\bar{\xi}_i)
    = \lim_{k\rightarrow+\infty}\LL_i(x_i(k),\xi_i(k))
    \leq\lim_{k\rightarrow+\infty} (Q_i(\xi_i(k))) =
    Q_i(\bar{\xi}_i),
  \end{align*}
  where in the inequality we use the property of
  $x_i(k)\in\Omega_i(\xi_i(k))$, and in the last equality
  we use the continuity of $Q_i$. Then
  $\bar{x}_i\in\Omega_i(\bar{\xi}_i)$ and the closedness of
  $\Omega_i$ follows.

  Note that $\Omega_i(\xi_i) =
  \Omega_i(\xi_i)\cap X$. Recall that $\Omega_i$
  is closed and $X$ is compact. Then it is a result of
  Proposition 1.4.9 in~\cite{JPA-HF:90} that $\Omega_i(\xi_i)$ is upper
  semicontinuous at $\xi_i\in \Xi'$; i.e, for any neighborhood
  ${\mathcal{U}}$ in $2^X$ of $\Omega_i(\xi_i)$, there is
  $\delta' > 0$ such that $\forall \tilde{\xi}_i\in
  B_{\Xi'}(\xi_i,\delta')$, it holds that
  $\Omega_i(\tilde{\xi}_i)\subset {\mathcal{U}}$. Let
  ${\mathcal{U}} = B_{2^{X}}(\Omega_i(\xi_i), \epsilon')$,
  and we obtain upper semicontinuity at $\xi_i$.
\end{proof}
With the above results, one can show the convergence of primal
estimates.
\begin{lemma}[Primal estimate convergence]
Under the assumptions in Theorem~\ref{the2}, for each $i\in V$, it
holds that
$\displaystyle{\lim_{k\rightarrow+\infty}x_i(k)=\tilde{x}_i}$ where
$\tilde{x}_i= \Omega_i(\xi_i^*)$.\label{lem3}
\end{lemma}

\begin{proof} The combination of upper semicontinuity of $\Omega_i$
in Lemma~\ref{lem3} and
$\displaystyle{\lim_{k\rightarrow+\infty}\xi_i(k)=\xi_i^*}$ with
$\xi_i^*$ given in Lemma~\ref{lem1} ensures that each accumulation
point of $\{x_i(k)\}$ is a point in the set $\Omega_i(\xi_i^*)$;
i.e., the convergence of $\{x_i(k)\}$ to the set $\Omega_i(\xi_i^*)$
can be guaranteed. Since $\Omega_i(\xi_i^*)$ is singleton, then
$\tilde{x}_i = \Omega_i(\xi_i^*)$. We arrive in the desired
result. \end{proof}

Now we are ready to show the main result of this paper,
Theorem~\ref{the2}. In particular, we will show complementary
slackness, primal feasibility of $\tilde{x}$, and its primal
optimality, respectively.

\textbf{Proof for Theorem~\ref{the2}:}

\textbf{Claim 1:} $\langle-\Delta - \tilde{x}_i + \tilde{x}_{i_D},
{\lambda}_i^*\rangle = 0$, $\langle -\Delta + \tilde{x}_i -
\tilde{x}_{i_D}, {w}_i^*\rangle = 0$ and $\langle
g(\tilde{x}_i),{\mu}_i^*\rangle = 0$.

\begin{proof}
Rearranging the terms related to $\lambda$ in~\eqref{e26} leads to the
following inequality holding for any $((\mu_i),\lambda,w)\in \Xi$ with
$(\mu_i,\lambda,w)\in M$ for all $i\in V$:
  \begin{align}
    &-\sum_{i\in V}2\alpha(k)(\langle -\Delta - x_i(k),
    v_{\lambda}^i(k)_i-\lambda_i\rangle + \langle x_{i_D}(k),
    v_{\lambda}^{i_D}(k)_i-\lambda_i\rangle)\nnum\\ &\leq
    \alpha(k)^2\sum_{i\in V}\|{\DD}_i(k)\|^2+\sum_{i\in
      V}(\|\xi_i(k)-\xi_i\|^2 -
    \|\xi_i(k+1)-\xi_i\|^2) \label{e34}\\ &+2\alpha(k)\sum_{i\in V}\{\langle
    -x_i(k), v_w^i(k)_{i_U}-w_{i_U}\rangle + \langle
    x_i(k)-\Delta,v_w^i(k)_i-w_i\rangle +\langle
    g(x_i(k)),\mu_i(k)-\mu_i\rangle\}.\nnum
  \end{align}
Sum~\eqref{e34} over $[0,K]$, divide by $s(K):=\sum_{k=0}^K\alpha(k)$,
and we have
\begin{align}
  &\frac{1}{s(K)}\sum_{k=0}^{K}\alpha(k)\sum_{i\in V}2(\langle \Delta
  + x_i(k), v_{\lambda}^i(k)_i-\lambda_i\rangle+ \langle -x_{i_D}(k),
  v_{\lambda}^{i_D}(k)_i-\lambda_i\rangle)\le \nnum\\
 &\leq
  \frac{1}{s(K)}\sum_{k=0}^{K}\alpha(k)^2\sum_{i\in V}\|{\DD}_i(k)\|^2
  +\frac{1}{s(K)}\{\sum_{i\in V}(\|\xi_i(0)-\xi_i\|^2 -
  \|\xi_i(K+1)-\xi_i\|^2) \label{e28}\\
 &+\sum_{k=0}^{K}2\alpha(k)\sum_{i\in
    V}(\langle g(x_i(k)),\mu_i(k)-\mu_i\rangle+\langle
  x_i(k)-\Delta,v_w^i(k)_i-w_i\rangle + \langle -x_i(k),
  v_w^i(k)_{i_U}-w_{i_U}\rangle)\}.
\end{align}
We now proceed to show $\langle-\Delta - \tilde{x}_i +
\tilde{x}_{i_D}, {\lambda}_i^*\rangle \geq 0$ for each $i\in
V$. Notice that we have shown that
$\displaystyle{\lim_{k\rightarrow+\infty}\|x_i(k)-\tilde{x}_i\|=0}$
for all $i\in V$, and it also holds that
$\displaystyle{\lim_{k\rightarrow+\infty}\|\xi_i(k)-\xi_i^*\|=0}$ for
all $i\in V$. Let $\lambda_i = \frac{1}{2}{\lambda}_i^*$, $\lambda_j =
{\lambda}_j^*$ for $j\neq i$ and $\mu_i = {\mu}_i^*$, $w = {w}^*$
in~\eqref{e28}. Recall that $\{\alpha(k)\}$ is not summable but square
summable, and $\{{\DD}_i(k)\}$ is uniformly bounded. Take
$K\rightarrow+\infty$, and then it follows from Lemma 5.1
in~\cite{MZ-SM:09c} that:
\begin{align}
  &\langle \Delta + \tilde{x}_i - \tilde{x}_{i_D},
  {\lambda}_i^*\rangle \leq 0.\label{e30}
\end{align}
On the other hand, since $\xi^*\in D^*_{\Delta}$, we have
$\|\xi^*\|\leq \gamma$ given the fact that $\gamma$ is an upper bound
of $D_{\Delta}^*$. Let $\xi := (\mu,\lambda,w)$ where $\xi_i :=
(\mu_i,\lambda,w)$. Then we could choose a sufficiently small $\delta'
> 0$ and $\xi\in \Xi$ in~\eqref{e28} such that $\|\xi_i\| \leq \gamma
+ \theta_i$ where $\theta_i$ is given in the definition of $M_i$ and
$\xi$ is given by: $\lambda_i = (1+\delta'){\lambda}_i^*$, $\lambda_j
= {\lambda}_j^*$ for $j\neq i$, $w = w^*$, $\mu = \mu^*$. Following
the same lines toward~\eqref{e30}, it gives that $-\delta \langle
\Delta + \tilde{x}_i - \tilde{x}_{i_D}, {\lambda}_i^*\rangle \leq
0$. Hence, it holds that $\langle-\Delta - \tilde{x}_i +
\tilde{x}_{i_D}, {\lambda}_i^*\rangle = 0$. The rest of the proof is
analogous and thus omitted.
\end{proof}
The proofs of the following two claims are in part analogous and can be
found in~\cite{MZ-SM:10-arxiv}.

\textbf{Claim 2:} $\tilde{x}$ is primal feasible to the approximate
problem ($P_{\Delta}$).

\begin{proof} We have known that $\tilde{x}_i\in X$. We
  proceed to show $-\Delta - \tilde{x}_i + \tilde{x}_{i_D} \leq 0$ by
  contradiction. Since $\|\xi^*\|\leq \gamma$, we could choose a sufficiently small $\delta' > 0$ and $\xi := (\mu,\lambda,w)$ where $\xi_i := (\mu_i,\lambda,w)$ and $\|\xi_i\| \leq \gamma + \theta_i$ in~\eqref{e28} as follows: if $(-\Delta -
  \tilde{x}_i + \tilde{x}_{i_D})_{\ell} > 0$, then $(\lambda_i)_{\ell}
  = ({\lambda}_i^*)_{\ell} + \delta'$; otherwise,
  $(\lambda_i)_{\ell} = ({\lambda}_i^*)_{\ell}$, and $w = w^*$, $\mu = \mu^*$. The rest of the proofs is analogous to Claim 1.

  Similarly, one can show $g(\tilde{x}_i)\leq0$ and $-\Delta +
  \tilde{x}_i - \tilde{x}_{i_D} \leq 0$ by applying analogous
  arguments. We conclude that $\tilde{x}$ is primal feasible to the approximate problem ($P_{\Delta}$).
\end{proof}

\textbf{Claim 3:} $\tilde{x}$ is a primal solution to the problem ($P_{\Delta}$).

\begin{proof}
  Since $\tilde{x}$ is primal feasible to the approximate problem ($P_{\Delta}$), then $\sum_{i\in
    V}f_i(\tilde{x}_i) \geq p^*_{\Delta}$. On the other hand, it follows from Claim 1 that
  \begin{align*}\sum_{i\in V}f_i(\tilde{x}_i) = \sum_{i\in
    V}\LL_i(\tilde{x}_i,\xi_i^*) \leq \sum_{i\in
    V}Q_i(\xi_i^*) \leq p^*_{\Delta}.\end{align*} We then conclude that $\sum_{i\in
    V}f_i(\tilde{x}_i) = p^*_{\Delta}$. In conjunction with the feasibility of $\tilde{x}$, this further %ensures that $\tilde{x}$ is primal optimal to the problem ($P_{\Delta}^*$). This completes the proofs for% Theorem~\ref{the2}.
\end{proof}

\section{Simulations}

In the extended version~\cite{MZ-SM:10-arxiv}, we examine several
numerical examples to illustrate the performance of our algorithm.
These present different cases of a robust source localization example,
where (i) the SD assumption is satisfied, (ii) the SD assumption is
violated, and (iii) a comparison with gradient-based algorithms is
made. An additional example includes that of a non-convex quadratic
program for which properties P1 and P2 can easily be verified.

\subsection{Robust source localization}

We consider a robust source localization problem where the objective
function is adopted from~\cite{AA-FB-LG-CS:08,POE-JG-JX-PO:10}. In
particular, consider a network of four agents $V \triangleq
\{1,\cdots,4\}$. The objective functions of agents are piecewise
linear and given by $f_i(z) = |\|z - a_i\| - r|$. The local inequality
functions are given by: \begin{align*}g_1(z) = \left[\begin{array}{c}
      z_1 - 8 \\
      -z_1 - 8 \\
      z_2 - 8 \\
      -z_2 - 8 \\
    \end{array}
  \right]
,\quad g_2(z) = \left[
    \begin{array}{c}
      z_1 - 9 \\
      -z_1 - 9 \\
      z_2 - 9 \\
      -z_2 - 9 \\
    \end{array}
  \right],\quad g_3(z) = \left[\begin{array}{c}
      z_1 - 8.5 \\
      -z_1 - 8.5 \\
      z_2 - 8.5 \\
      -z_2 - 8.5 \\
    \end{array}
  \right],\quad g_4(z) = \left[
    \begin{array}{c}
      z_1 - 9.5 \\
      -z_1 - 9.5 \\
      z_2 - 9.5 \\
      -z_2 - 9.5 \\
    \end{array}
  \right],\end{align*} and, the local constraint sets are given by \begin{align*}&X_1 = \{z\in\real^2\;|\; -10\leq z_1 \leq 10,\;\;-10\leq z_2 \leq 10\},\\
  &X_2 = \{z\in\real^2\;|\; -10.5\leq z_1 \leq 10.5,\;\;-10.5\leq z_2 \leq 10.5\},\\
  &X_3 = \{z\in\real^2\;|\; -9\leq z_1 \leq 9,\;\;-10\leq z_2 \leq 10\},\\
  &X_4 = \{z\in\real^2\;|\; -11\leq z_1 \leq 11,\;\;-9\leq z_2 \leq 9\}.\end{align*}

In the simulation, we choose the parameter $\delta = 0.1$. The local
Lagrangian function can be written as $\LL_i(x_i,\xi_i) = f_i(x_i) +
\langle\zeta_i,x_i\rangle$ by dropping the terms independent of $x_i$
and $\zeta_i$ is linear in $\xi_i$. Figure~\ref{fig_section} shows the
sectional plot of $f(z) \triangleq \sum_{i\in V}f_i(z)$ along
$z_1$-axle, demonstrating that $f$ is nonconvex and has local minima.

The inter-agent topologies $\GG(k)$ are given by: $\GG(k)$ is $1\leftrightarrow2\leftrightarrow3\leftrightarrow4$ when $k$ is odd, and $\GG(k)$ is $1\rightarrow2\leftrightarrow3\leftarrow4\rightarrow1$ when $k$ is even. It is easy to see that $\GG(k)$ satisfies the periodical strong connectivity assumption~\ref{asm10}.

\subsubsection{Simulation 1; the assumption of SD is satisfied}

For this numerical simulation, we consider the set of parameters $r =
0.75$, $a_1 = [0\;\; 0]^T$, $a_2 = [0\;\; 1]^T$, $a_3 = [1\;\; 0]^T$
and $a_4 = [1\;\; 1]^T$. Figure~\ref{fig_surf} shows the surface of the
global objective function $f(x) = \sum_{i\in V}f_i(x)$. The contour,
Figure~\ref{fig_contour}, indicates that the set of optimal solutions
is a region around $[0.5 \;\; 0.5]^T$. Figure~\ref{fig_objective} is
the sectional plot of $f_1$ along $z_1$-axle.

From Figures~\ref{fig_slop1}--\ref{fig_slop2_portion}, one can see
that $\zeta_i(k)$ converges to some point
$(0,0.05]\times(0,0.05]$. Hence, $\xi^*\in D_s^*$; i.e., the
assumption of SD is satisfied.

The simulation results are shown in Figures~\ref{fig_x1}
to~\ref{fig_x2}. In particular, Figure~\ref{fig_x1}
(resp. Figure~\ref{fig_x2}) shows the evolution of primal estimates of
the primal solution $x^*(1)$ (resp. $x^*(2)$). After about 25
iterates, the primal estimates oscillate within a very small region
and eventually agree upon the point $[0.4697\;0.472]^T$ which coincides
with a global optimal solution.

\subsubsection{Simulation 2; the assumption of SD is violated}

Consider the same problem as Simulation 1 with $r = 0.75$ and $a_i =
[0\;\; 0]^T$ for $i\in V$. From Figures~\ref{fig_slop1_oscillation}
and~\ref{fig_slop2_oscillation}, one can see that $\zeta_i(k)$
converges to $[0\;\;0]^T$. Hence, $\xi^*\notin D_s^*$ and the
assumption of SD is not satisfied. Figures~\ref{fig_x1_oscillation}
and~\ref{fig_x2_oscillation} confirms that primal estimates fail to
converge in this case.

\subsubsection{Simulation 3; comparison with gradient-based algorithms}

Consider the same set of parameters as in Simulation~1 without
including the inequality constraints. The multi-agent interaction
topologies are the same. We implement the diffusion gradient algorithm
in~\cite{AN-AO-PAP:08} for this problem. Figures~\ref{fig_grad_x1}
and~\ref{fig_grad_x2} show that the primal estimates reach the
consensus value of $[-0.65\;\;-0.38]^T$ after $40000$ iterates. From
Figure~\ref{fig_contour}, it is clear that $[-0.65\;\;-0.38]^T$ is not a
global optimum. By comparing Figures~\ref{fig_x1},
~\ref{fig_x2},~\ref{fig_grad_x1} and~\ref{fig_grad_x2}, one can see
that our algorithm is much faster than the diffusion gradient method
at the expense of solving a global optimization problem at each
iterate.

We also implement the incremental gradient algorithm
in~\cite{SSR-AN-VVV:08b} for the same set of parameters in Simulation
1 without including inequality
constraints. Figure~\ref{fig_incremental} demonstrates that the
performance of the incremental gradient method is analogous to the
diffusion gradient algorithm; i.e., the estimates are trapped in some
local minimum, and the convergence rate is slower than our algorithm.

\subsection{Nonconvex quadratic programming}

Consider a network of four agents where the topologies are the same as before.
The local objective function is $f_i(z) = \|z\|^2_{P_i} + \langle q_i,z\rangle$ and the
local constraint function is $g_i(z) = \|z\|^2_{A_i} + \langle
b_i,z\rangle + c_i \leq 0$. In particular, we use the following
parameters:
\begin{align*}&P_1 = P_2 = P_3 = \left[
\begin{array}{cc}
0 & 1 \\
1 & 1 \\
\end{array}
\right],\quad P_4 = \left[
\begin{array}{cc}
0 & 1 \\
1 & 0 \\
\end{array}
\right],\\
& A_1 = A_4 = \left[
\begin{array}{cc}
18 & 0 \\
0 & 8 \\
\end{array}
\right],\quad b_1 = b_4 = [2\;\;0],\quad c_1 = c_4 = -1,\\
& A_2 = \left[
\begin{array}{cc}
13 & -2 \\
-2 & 8 \\
\end{array}
\right],\quad b_2 = [0\;\;4],\quad c_2 = -1,\\
& A_3 = \left[
\begin{array}{cc}
5 & -5 \\
-5 & 5 \\
\end{array}
\right],\quad b_3 = [10\;\;10],\quad c_3 = -1.
\end{align*} And the local constraint sets are given by \begin{align*}&X_1 = \{z\in\real^2\;|\; -10\leq z_1 \leq 10,\;\;-10\leq z_2 \leq 10\},\\
  &X_2 = \{z\in\real^2\;|\; -10.5\leq z_1 \leq 10.5,\;\;-10.5\leq z_2 \leq 10.5\},\\
  &X_3 = \{z\in\real^2\;|\; -9\leq z_1 \leq 9,\;\;-10\leq z_2 \leq 10\},\\
  &X_4 = \{z\in\real^2\;|\; -11\leq z_1 \leq 11,\;\;-9\leq z_2 \leq 9\}.\end{align*}

One can see that the sum of $P_i$ is
\begin{align*}P = \sum_{i=1}^4P_i = \left[\begin{array}{cc}
0 & 4 \\
4 & 3 \\
\end{array}\right]\end{align*} which is indefinite. We choose $\delta = 0.3$ for the simulation.

The dual estimates associated with the inequality constraints converge
to $\mu_1^* = 0.5027$, $\mu_2^* = 3.1061$, $\mu_3^* = 1.8792$ and
$\mu_4^* = 2.2910$ in Figure~\ref{fig_QP_mu1}. One can verify
that properties P1 and P2 hold in this case:
\begin{align*}
P_i + \mu_i^*A_i > 0,\quad (P_i +
  \mu_i^*A_i)^{-1}\zeta_i^*\in X_i, \quad i\in V.
\end{align*}
The primal estimates converge to $[-0.1933\;\;-0.3005]^T$,
$[-0.2621\;\;-0.5360]^T$, $[-0.1013\;\;-0.0116]^T$ and
$[-0.2144\;\;-0.2667]^T$ in Figures~\ref{fig_QP_x1}
and~\ref{fig_QP_x2}, and the collection of these points consists of a
global optimal solution to the approximate problem.

\section{Conclusions}

We have studied a distributed dual algorithm for a class of
multi-agent nonconvex optimization problems. The convergence of the
algorithm has been proven under the assumptions that (i) the Slater's
condition holds; (ii) the optimal solution set of the dual limit is
singleton; (iii) the network topologies are strongly connected over
any given bounded period. An open question is how to address the
shortcomings imposed by nonconvexity and multi-agent interactions
settings.

%\bibliographystyle{plain}
%\bibliography{alias,Main,New,FB,SMD-add,SM,Main-sonia,New-sonia}

% simulation 1

\begin{figure}[h]
  \centering
  \includegraphics[width = .5\linewidth]{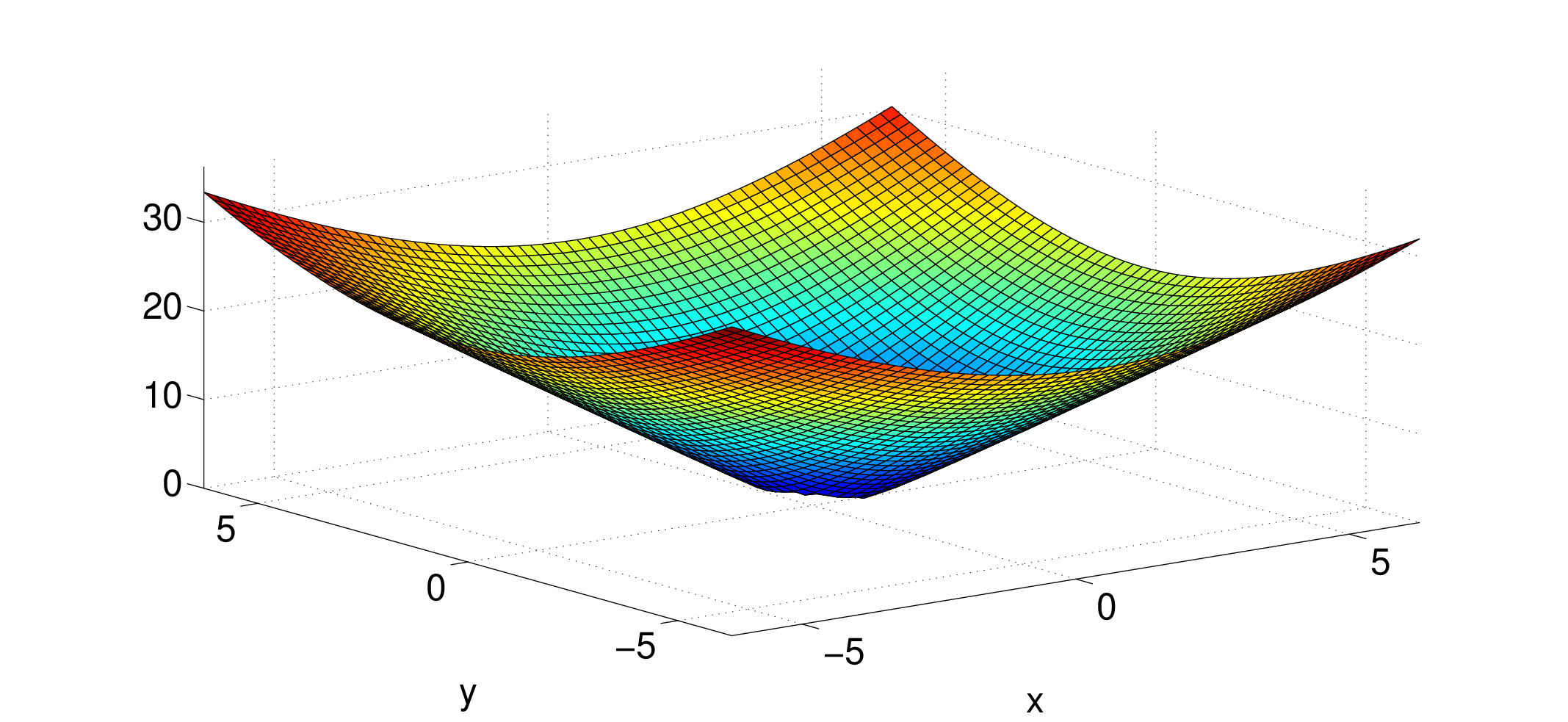}
  \caption{The $3-D$ plot of the global objective function} \label{fig_surf}
  \includegraphics[width = .5\linewidth]{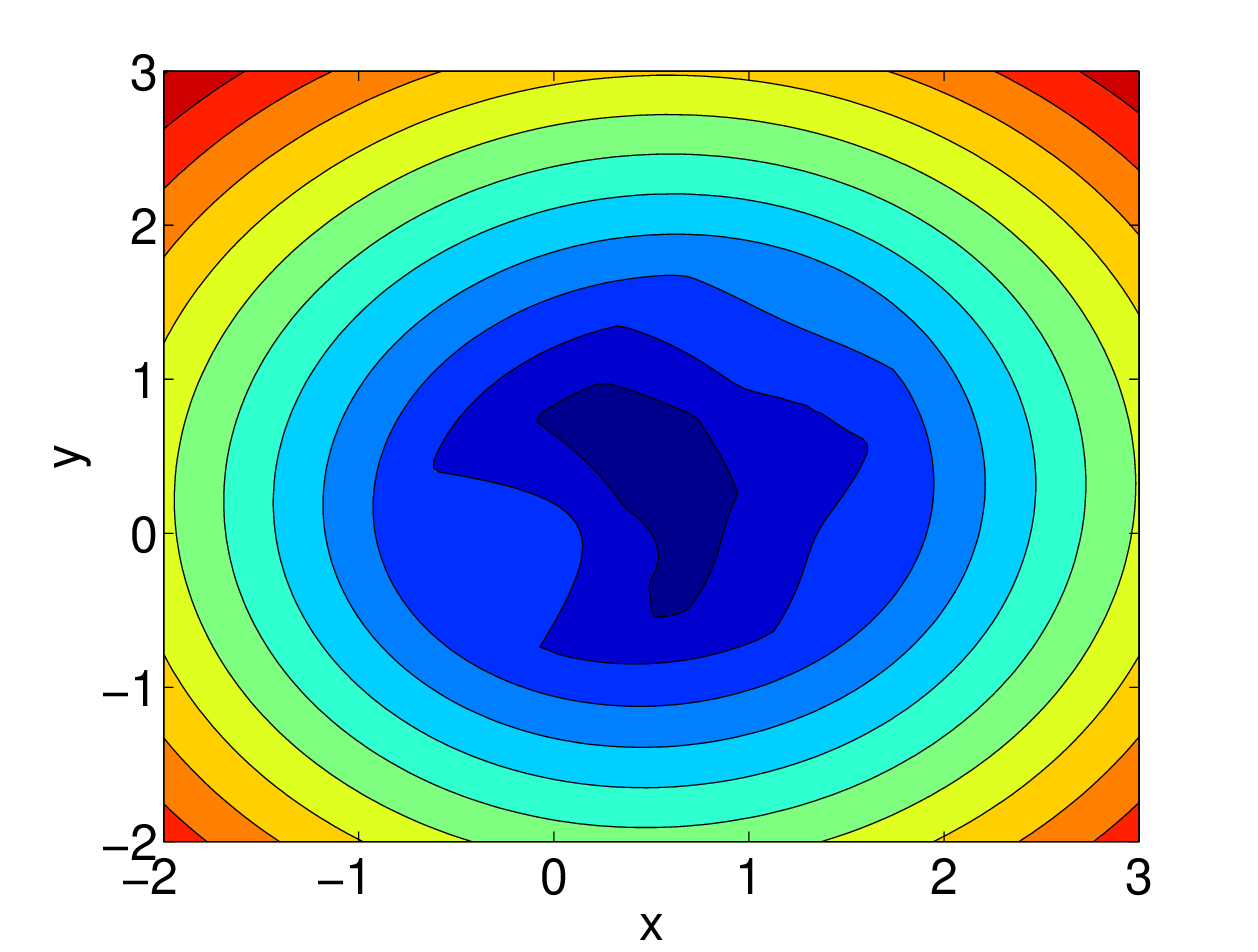}
  \caption{The contour of the global objective function} \label{fig_contour}
\end{figure}

\begin{figure}[h]
  \centering
  \includegraphics[width = .5\linewidth]{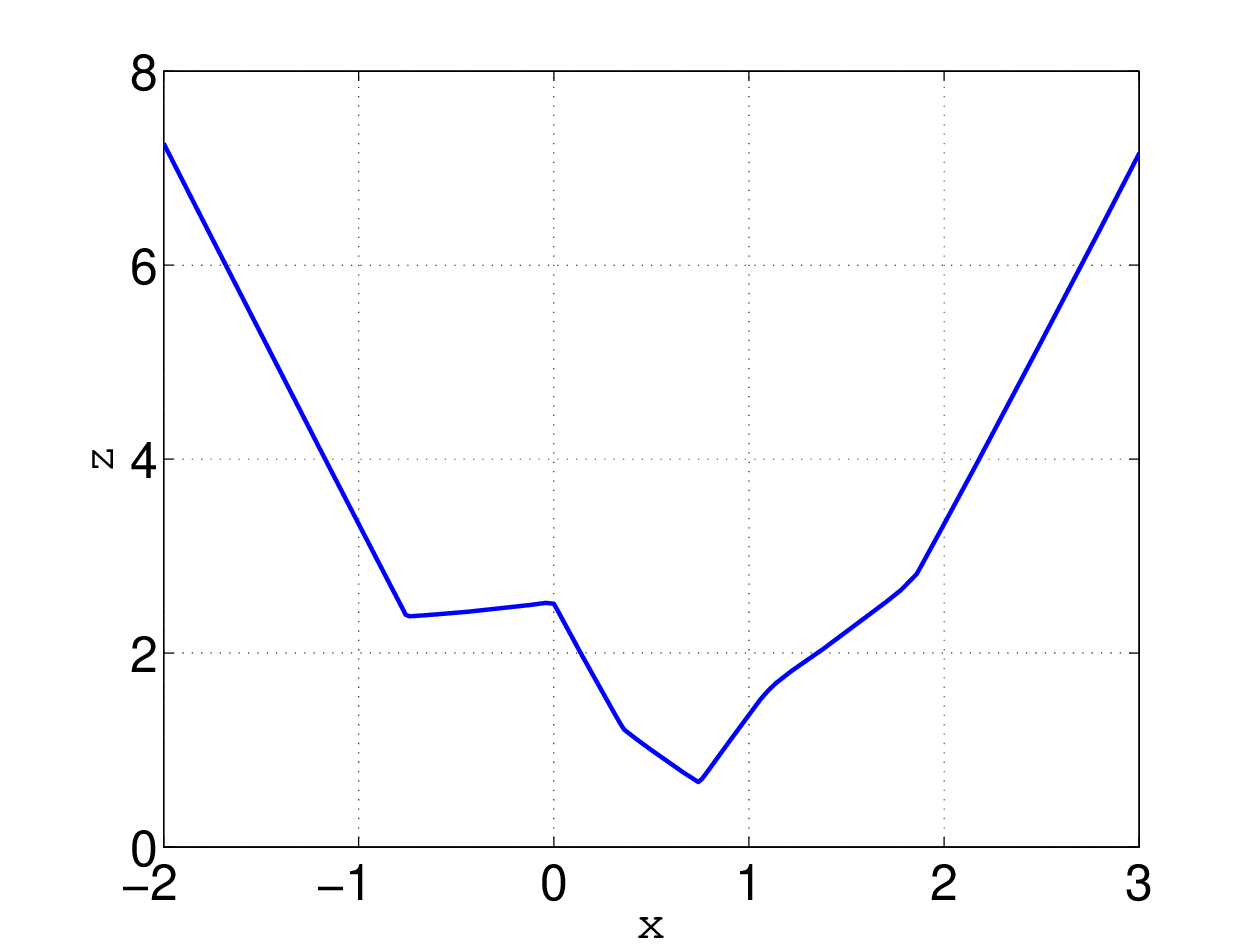}
  \caption{The sectional plot of the global objective function along $z_1$-axle} \label{fig_section}
  \includegraphics[width = .5\linewidth]{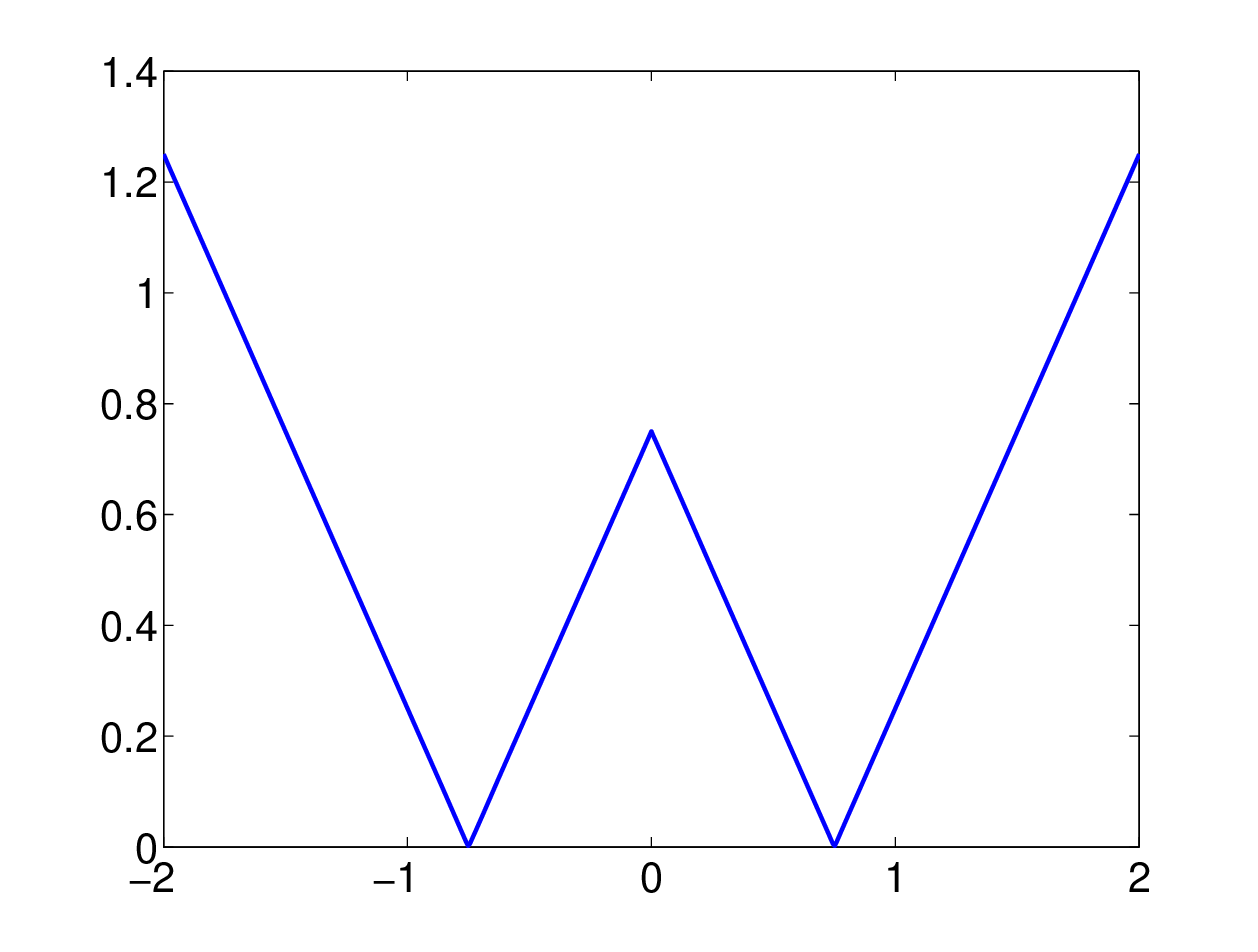}
  \caption{The sectional plot of $f_1$ along $z_1$-axle} \label{fig_objective}
\end{figure}

\begin{figure}[h]
  \centering
  \includegraphics[width = .75\linewidth]{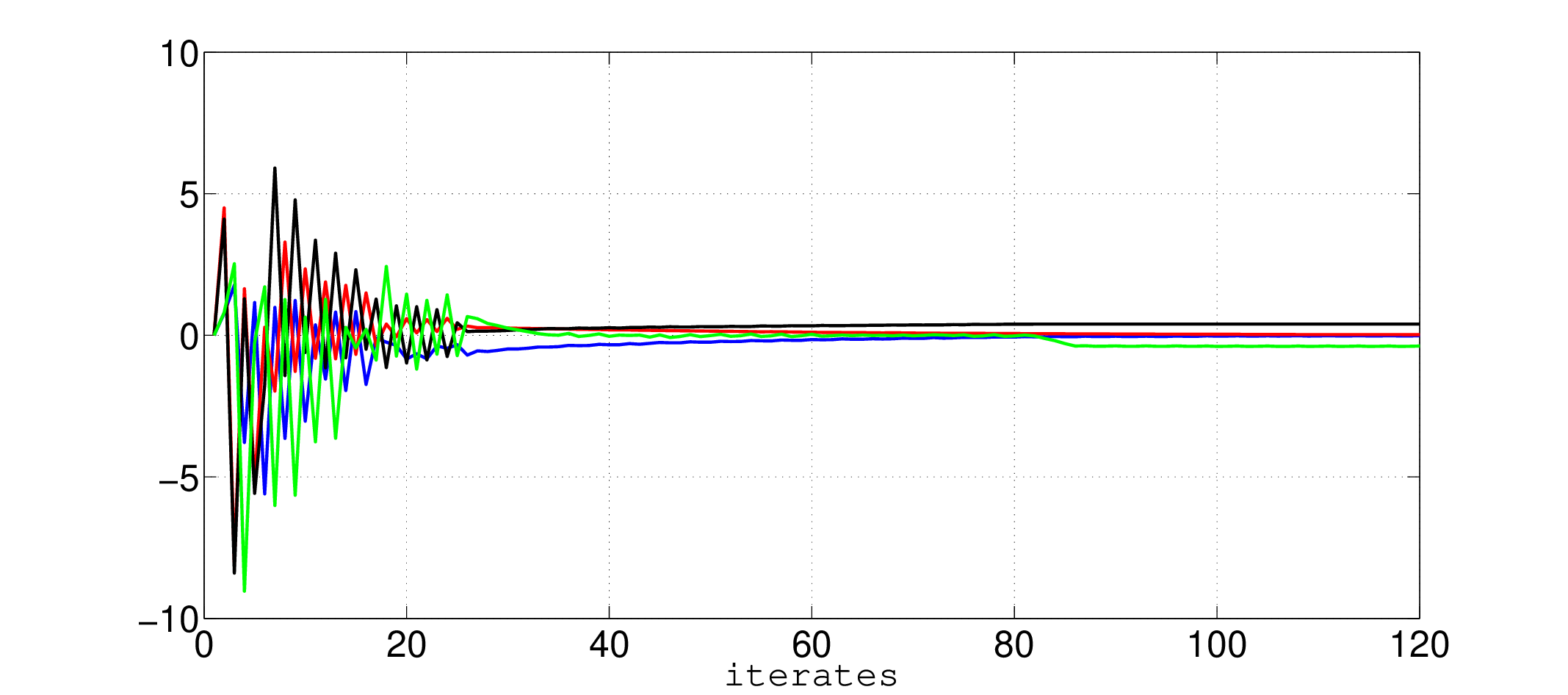}
  \caption{The evolution of $\zeta_{i,1}$} \label{fig_slop1}
  \includegraphics[width = .75\linewidth]{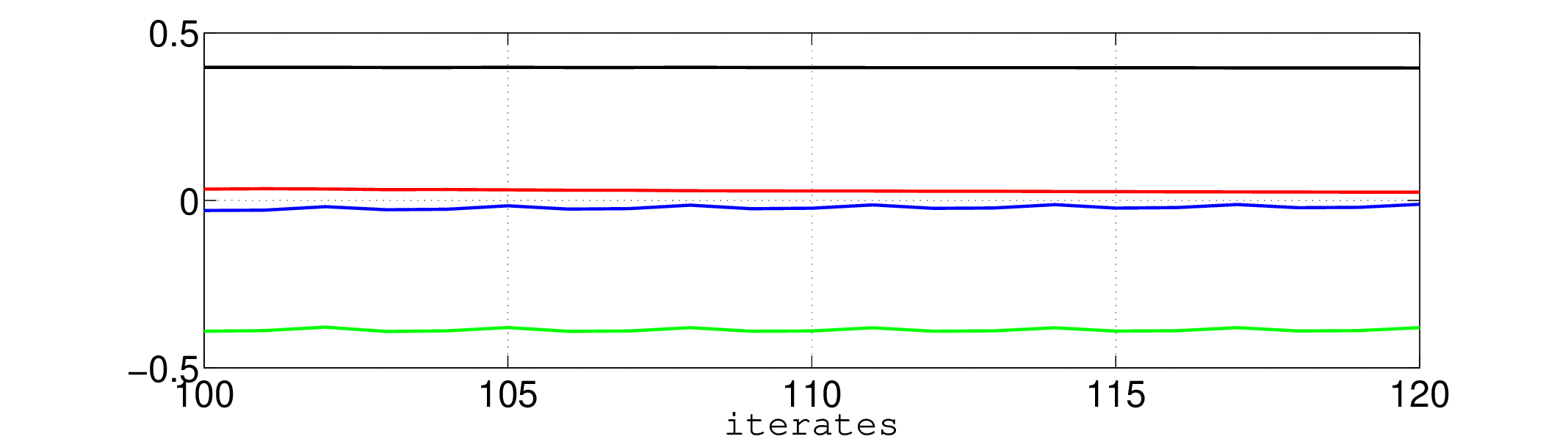}
  \caption{A portion of the evolution of $\zeta_{i,1}$} \label{fig_slop1_portion}
\end{figure}

\begin{figure}[h]
  \centering
  \includegraphics[width = .75\linewidth]{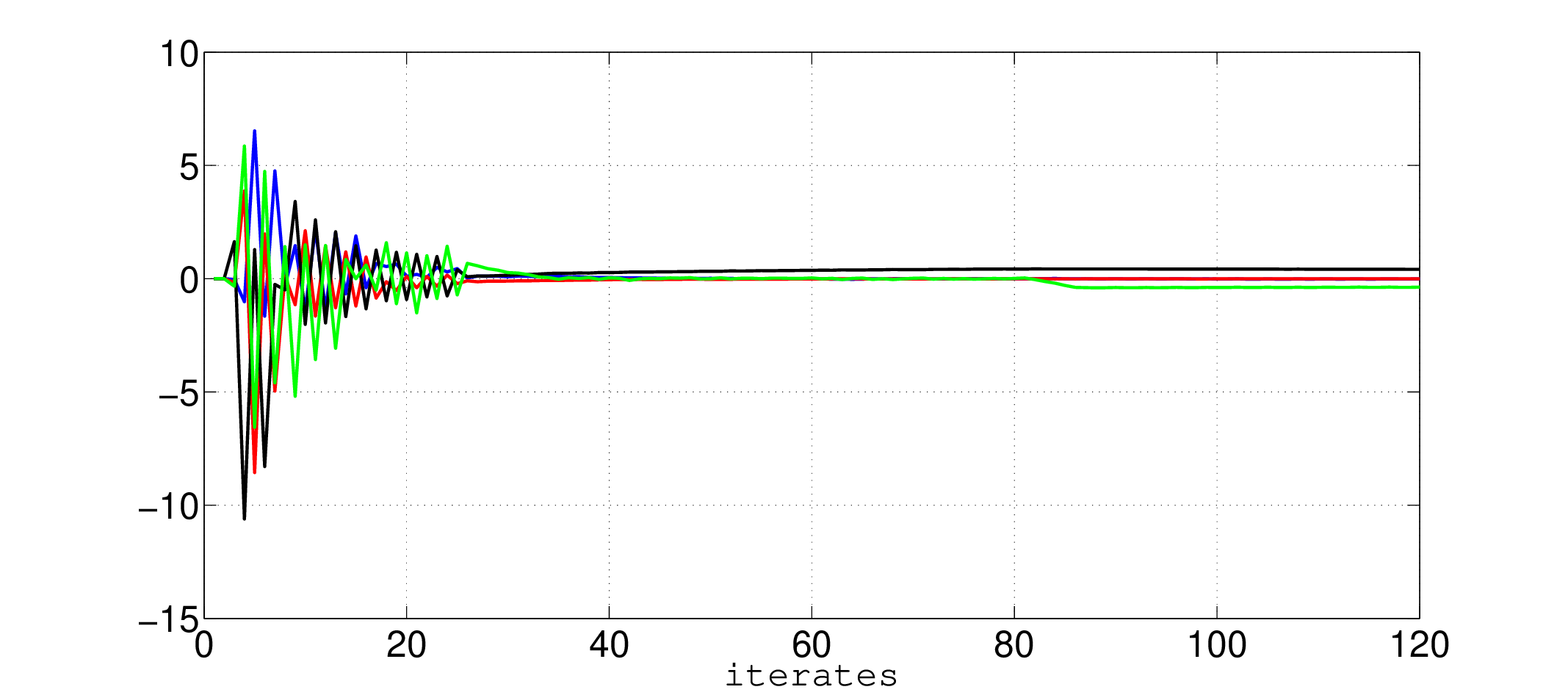}
  \caption{The evolution of $\zeta_{i,2}$} \label{fig_slop2}
  \includegraphics[width = .75\linewidth]{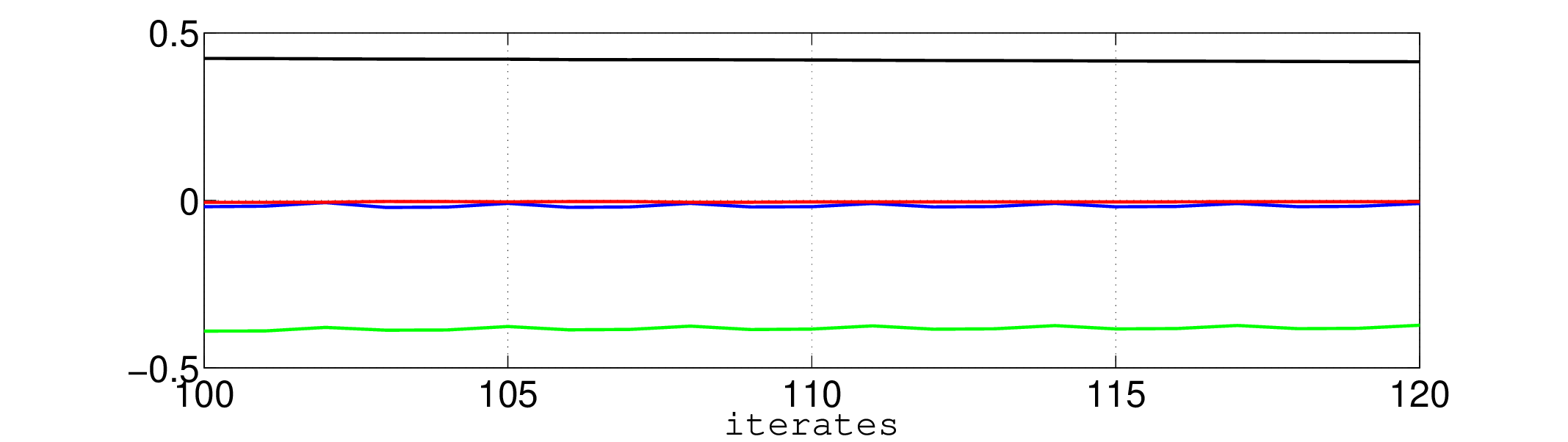}
  \caption{A portion of the evolution of $\zeta_{i,2}$} \label{fig_slop2_portion}
\end{figure}

\begin{figure}[h]
  \centering
  \includegraphics[width = .75\linewidth]{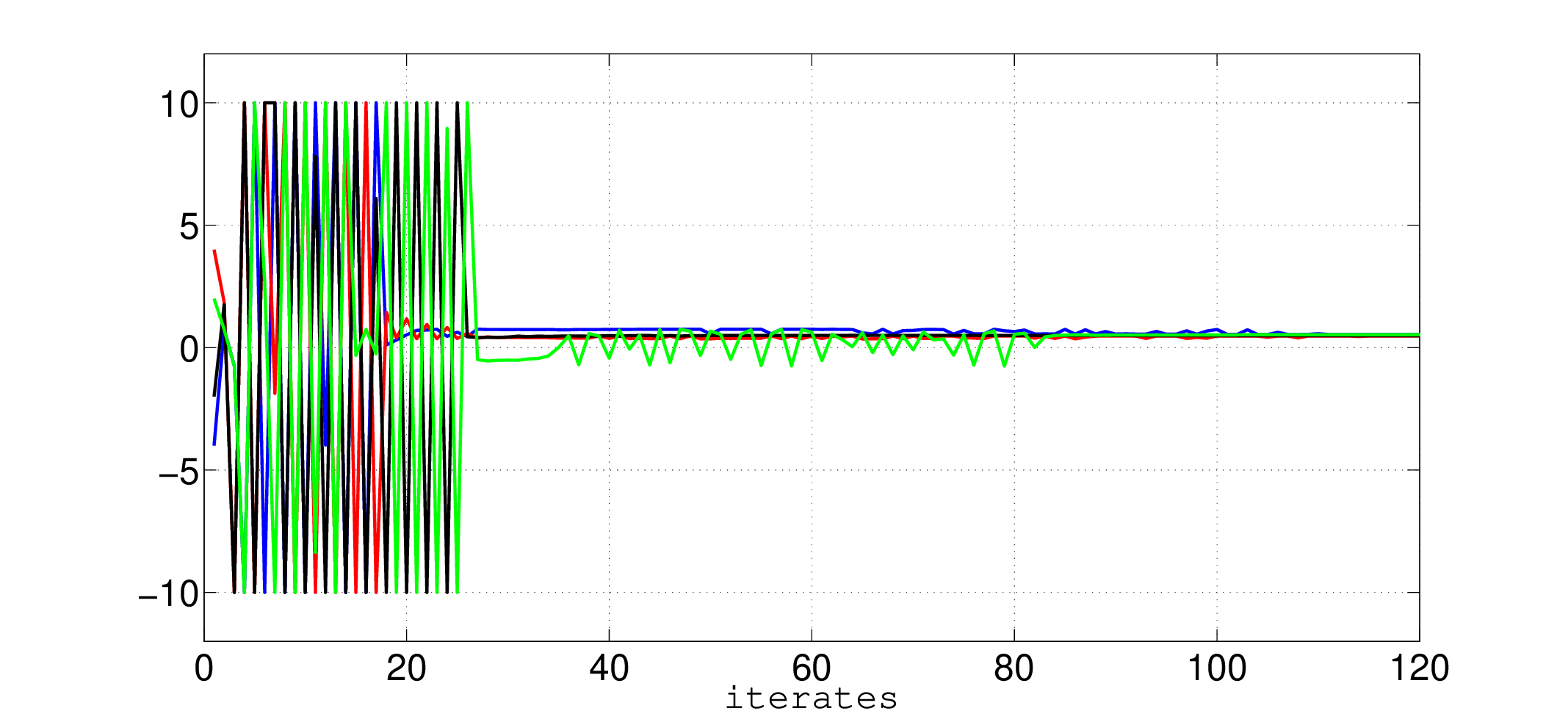}
  \caption{The primal estimates of $x^*(1)$} \label{fig_x1}
  \includegraphics[width = .75\linewidth]{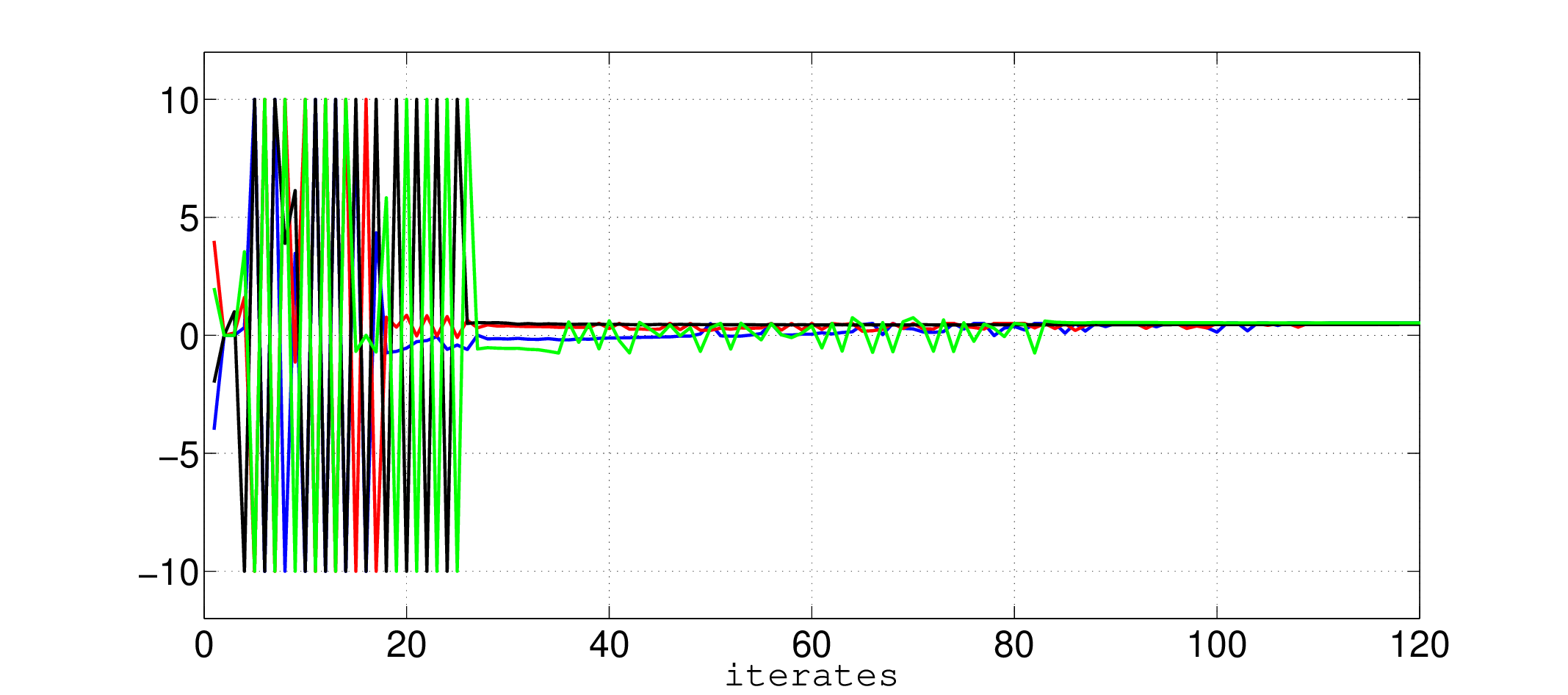}
  \caption{The primal estimates of $x^*(2)$} \label{fig_x2}
\end{figure}

% simulation 2

\begin{figure}[h]
  \centering
  \includegraphics[width = .5\linewidth]{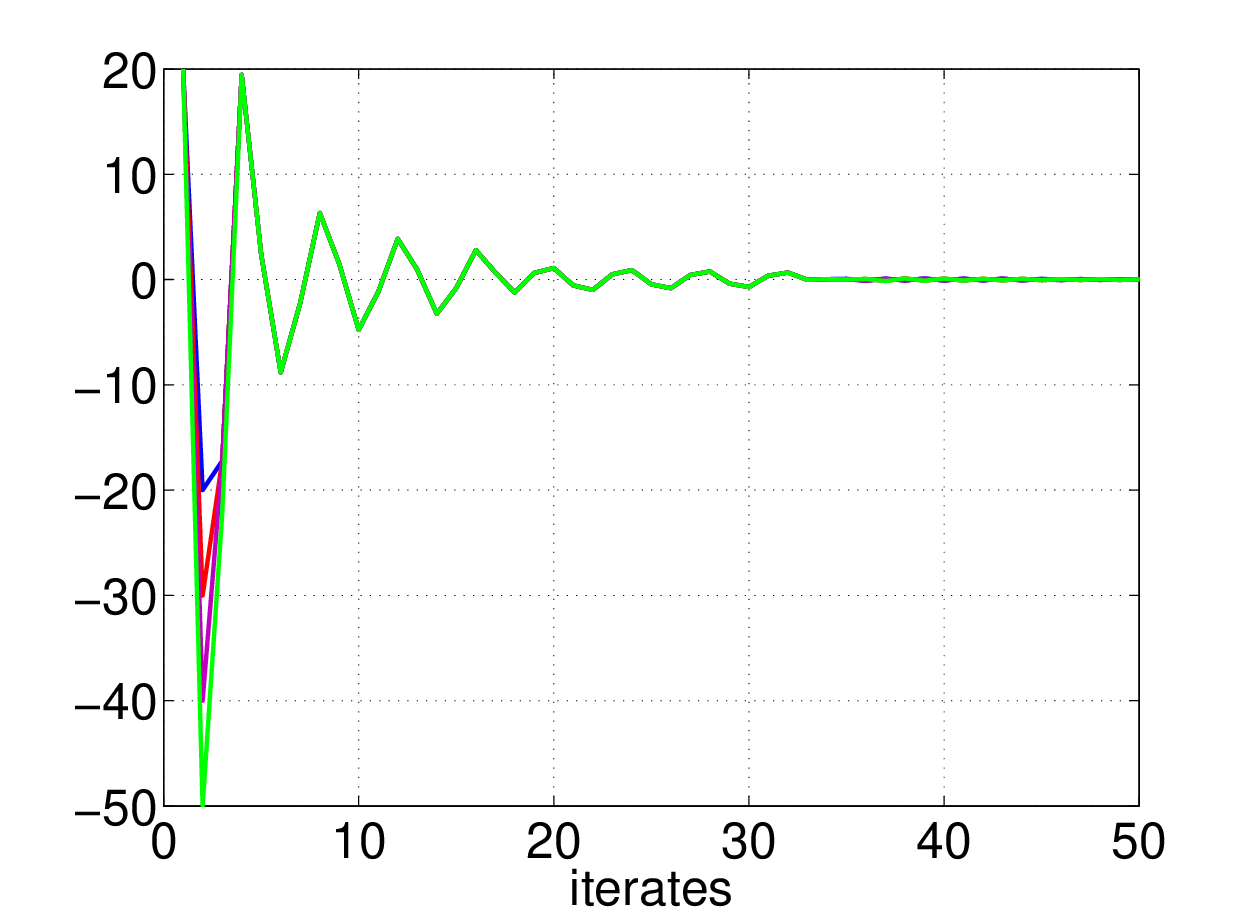}
  \caption{The evolution of $\zeta_{i,1}$} \label{fig_slop1_oscillation}
  \includegraphics[width = .5\linewidth]{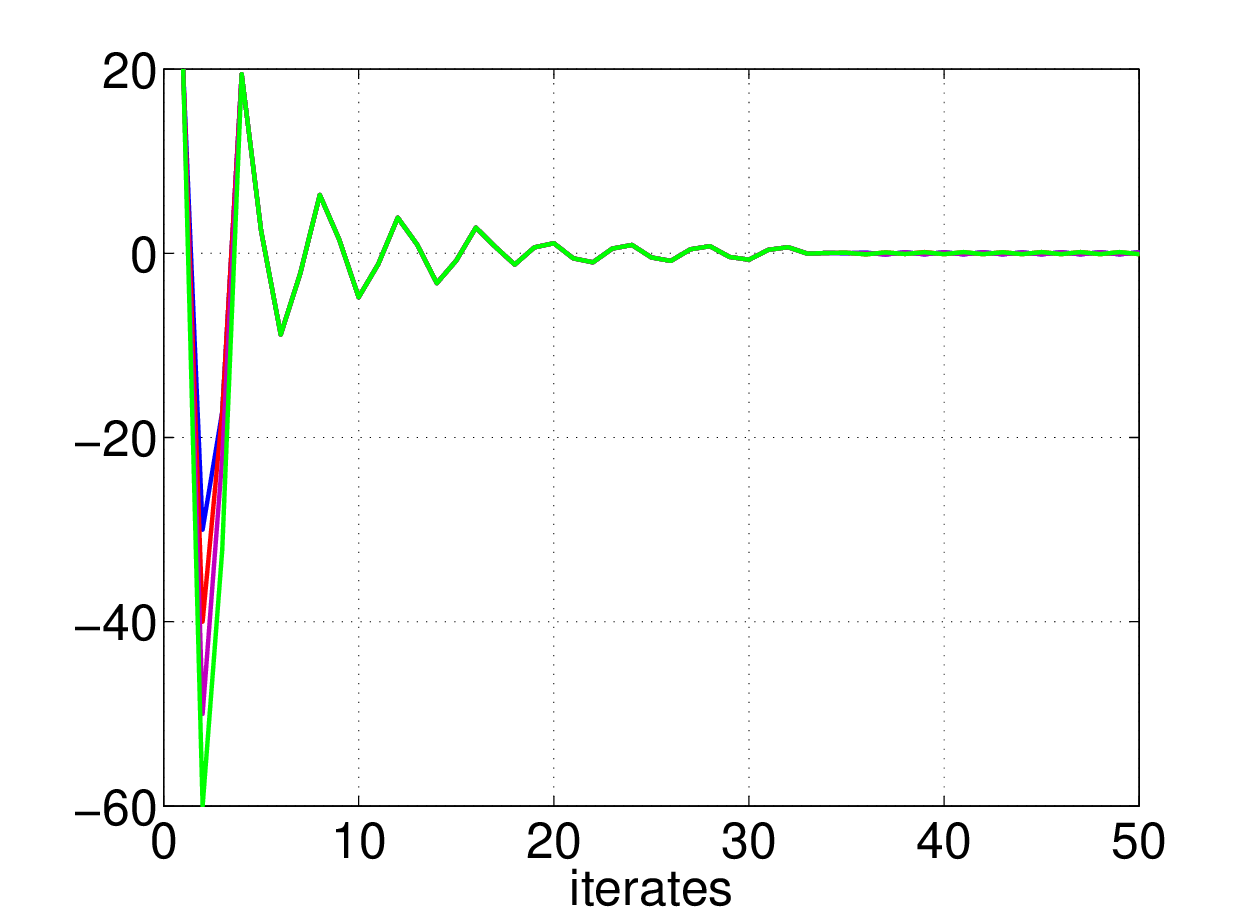}
  \caption{The evolution of $\zeta_{i,2}$} \label{fig_slop2_oscillation}
\end{figure}

\begin{figure}[h]
  \centering
  \includegraphics[width = .5\linewidth]{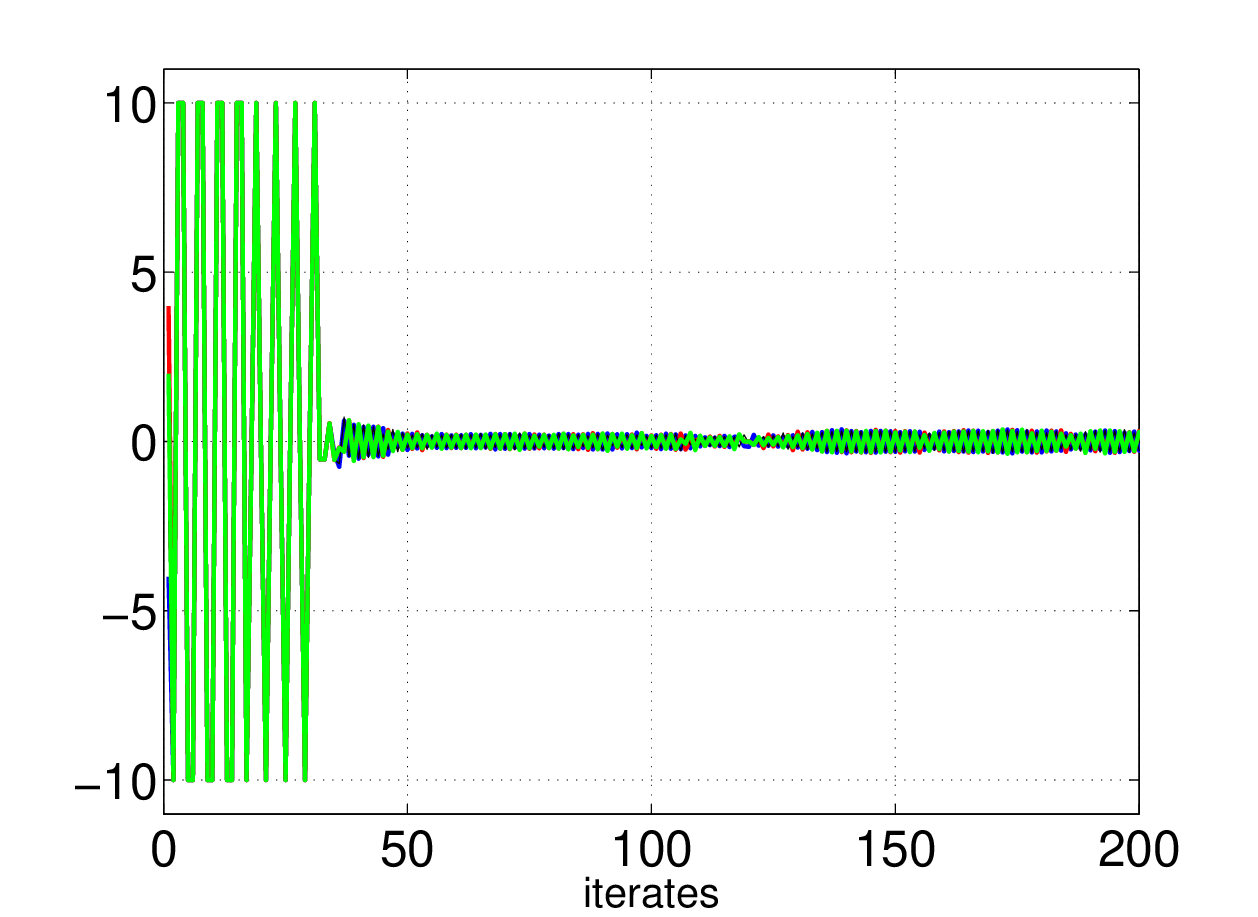}
  \caption{The primal estimates of $x^*(1)$} \label{fig_x1_oscillation}
  \includegraphics[width = .5\linewidth]{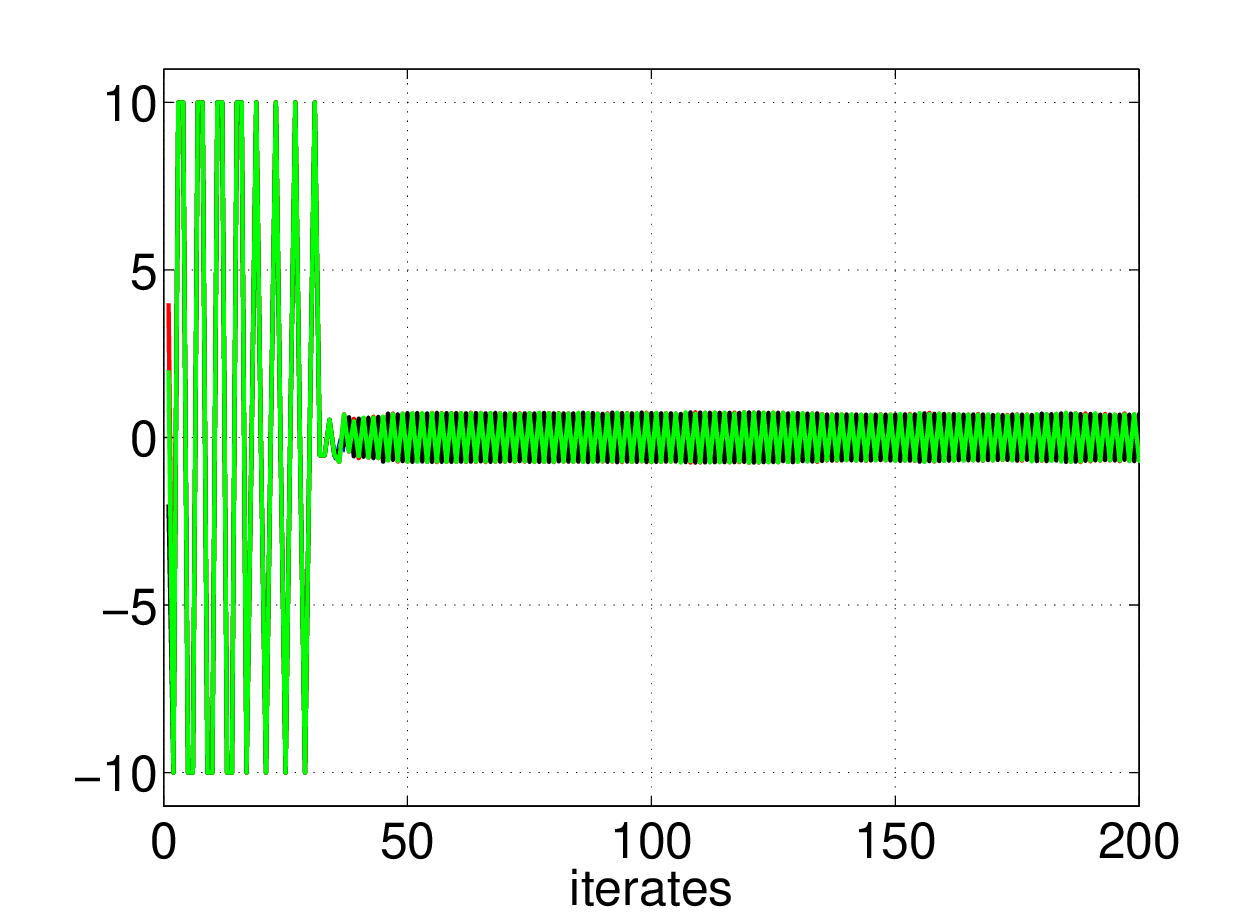}
  \caption{The primal estimates of $x^*(2)$} \label{fig_x2_oscillation}
\end{figure}

% simulation 3

%\begin{figure}[h]
%  \centering
%  \includegraphics[width = .75\linewidth]{dual_deviation}
%  \caption{The maximum deviation of dual estimates} \label{fig_D}
%  \includegraphics[width = .75\linewidth]{dual_alpha1}
%  \caption{The dual estimates of $\lambda^*(1)$} \label{fig_alpha}
%\end{figure}

\begin{figure}[h]
  \centering
  \includegraphics[width = .75\linewidth]{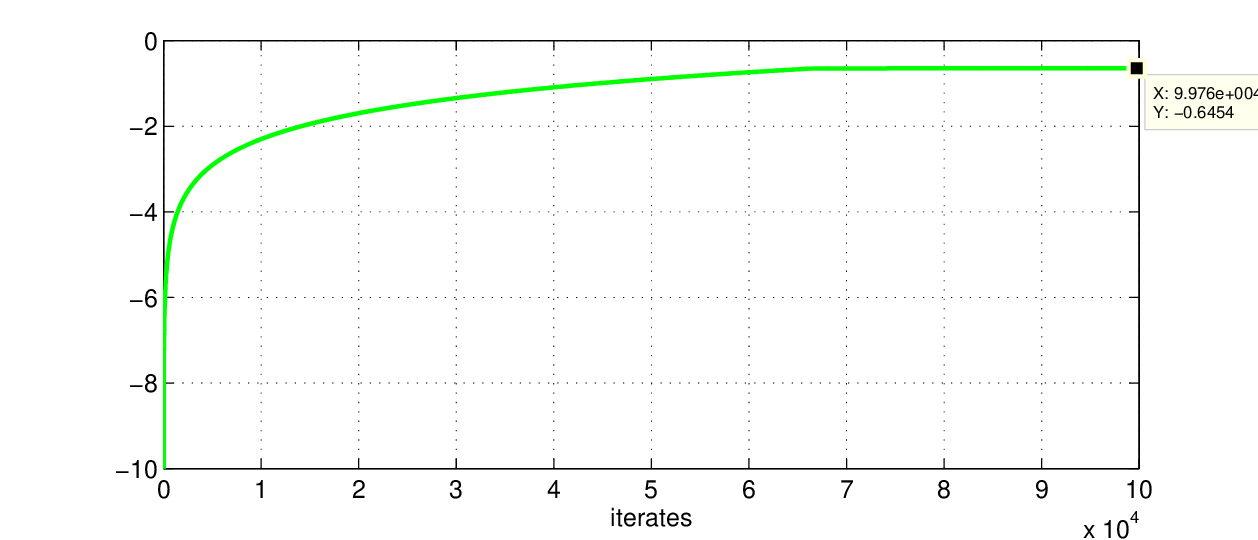}
  \caption{The primal estimates of $x^*(1)$ of the diffusion gradient method} \label{fig_grad_x1}
  \includegraphics[width = .75\linewidth]{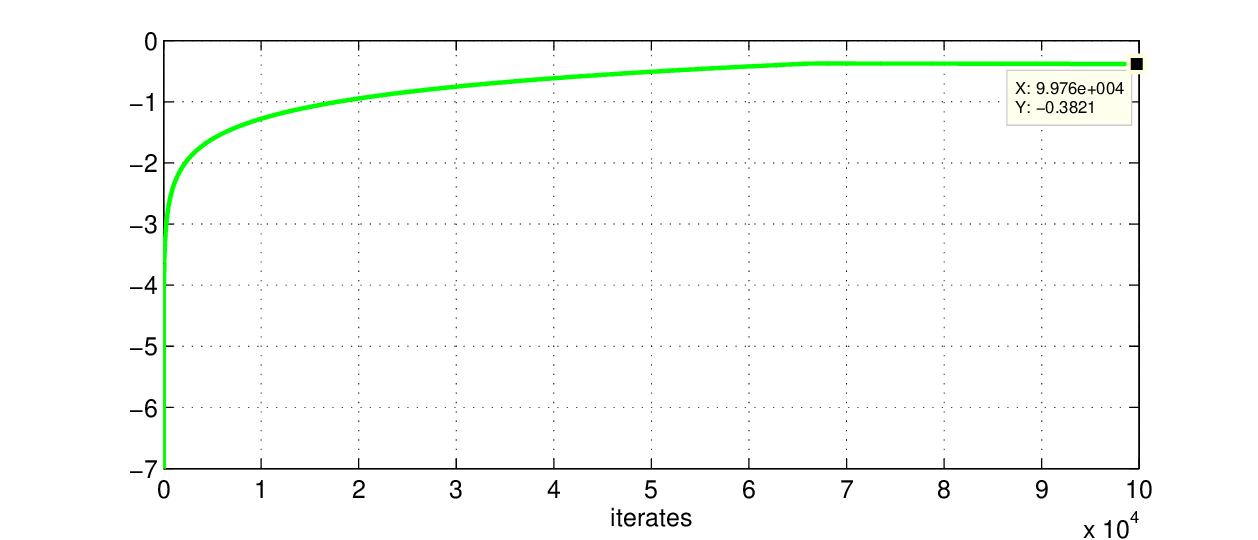}
  \caption{The primal estimates of $x^*(2)$ of the diffusion gradient method} \label{fig_grad_x2}
\end{figure}

\begin{figure}[h]
  \centering
  \includegraphics[width = .5\linewidth]{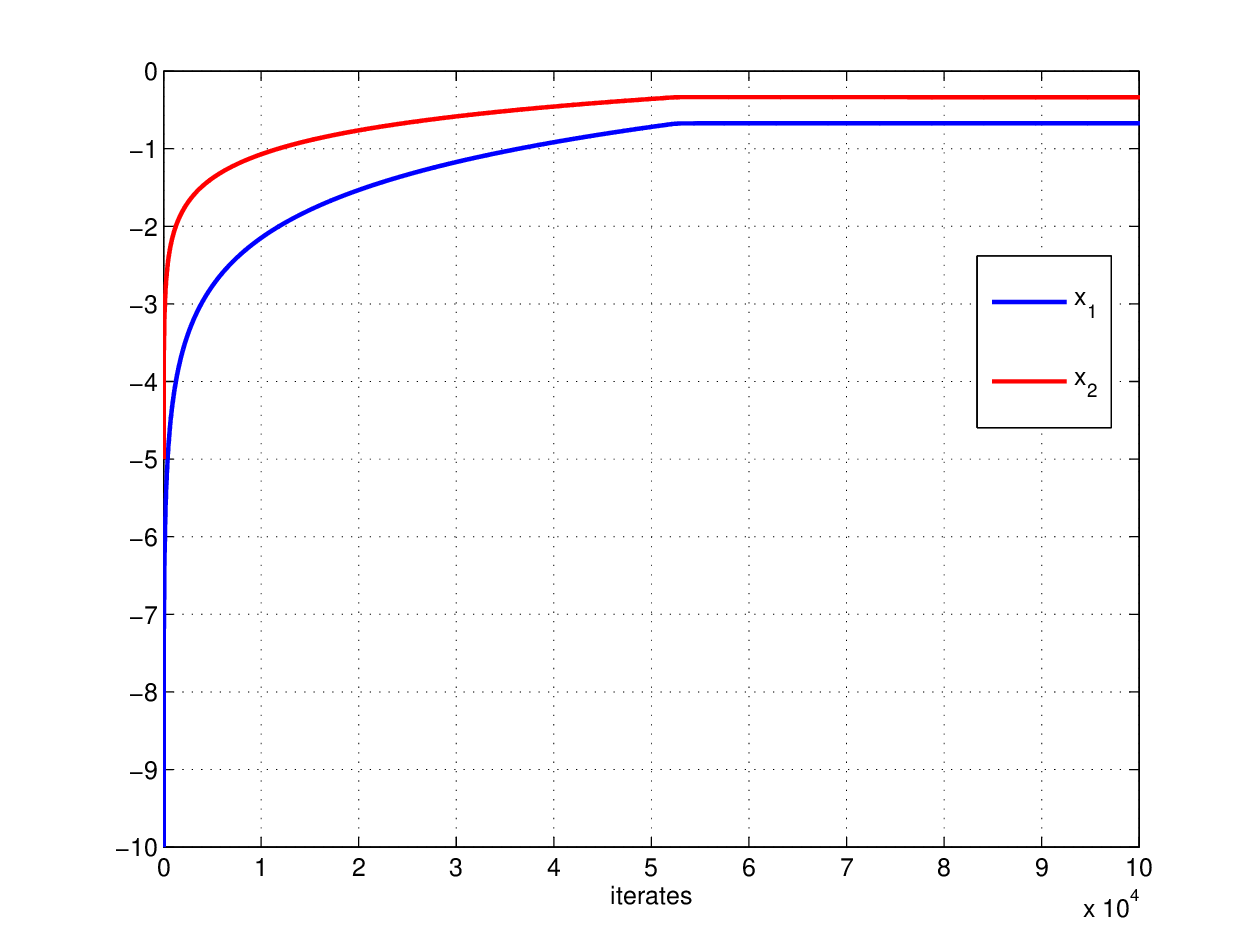}
  \caption{The primal estimates of $x^*(1)$ of the incremental gradient method} \label{fig_incremental}
\end{figure}

%\begin{figure}[h]
%\begin{center}$
%\begin{array}{cc}
%\includegraphics[width=2.5in]{grad_primal_x1} &
%\includegraphics[width=2.5in]{grad_primal_x2}
%\end{array}$
%\end{center}
%\caption{my caption}
%\end{figure}

% simulation 4

\begin{figure}[h]
  \centering
  \includegraphics[width = .75\linewidth]{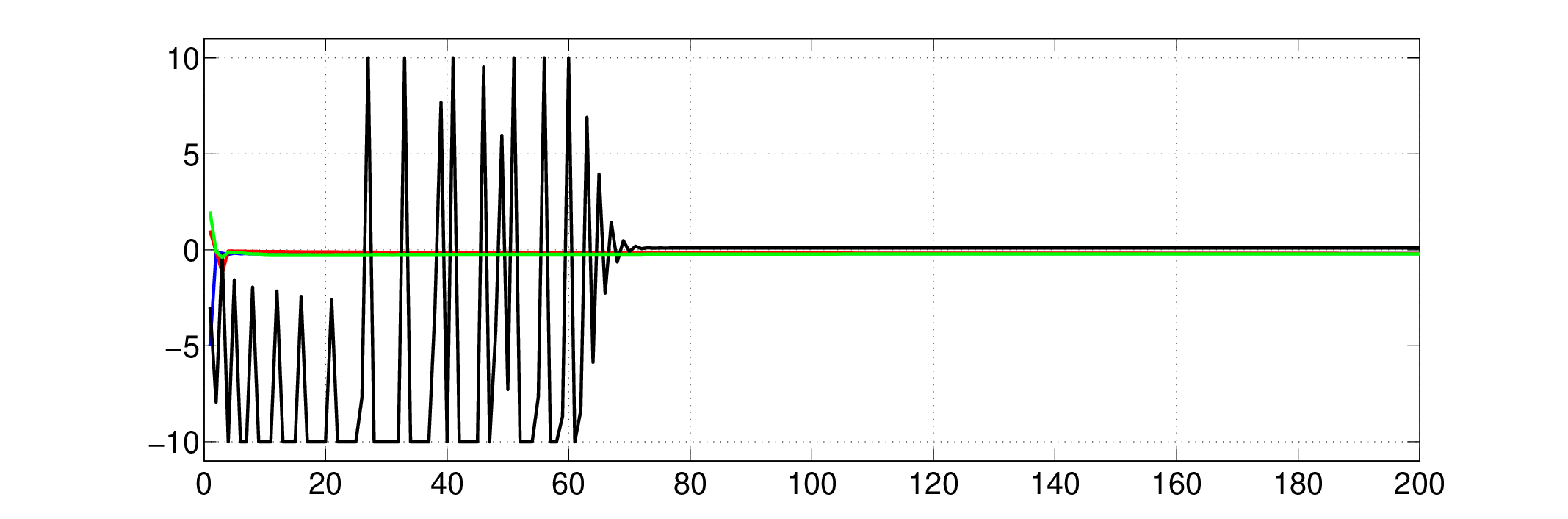}
  \caption{The primal estimates of $x^*(1)$ of quadratic programming} \label{fig_QP_x1}
  \includegraphics[width = .75\linewidth]{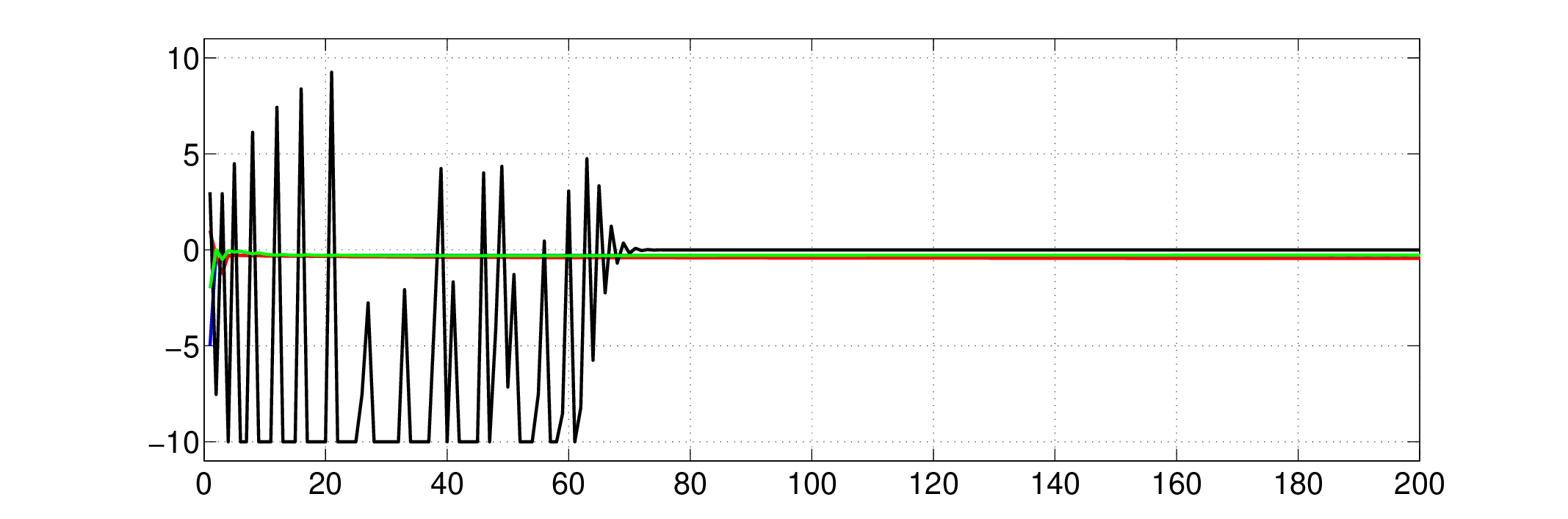}
  \caption{The primal estimates of $x^*(2)$ of quadratic programming} \label{fig_QP_x2}
  \includegraphics[width = .75\linewidth]{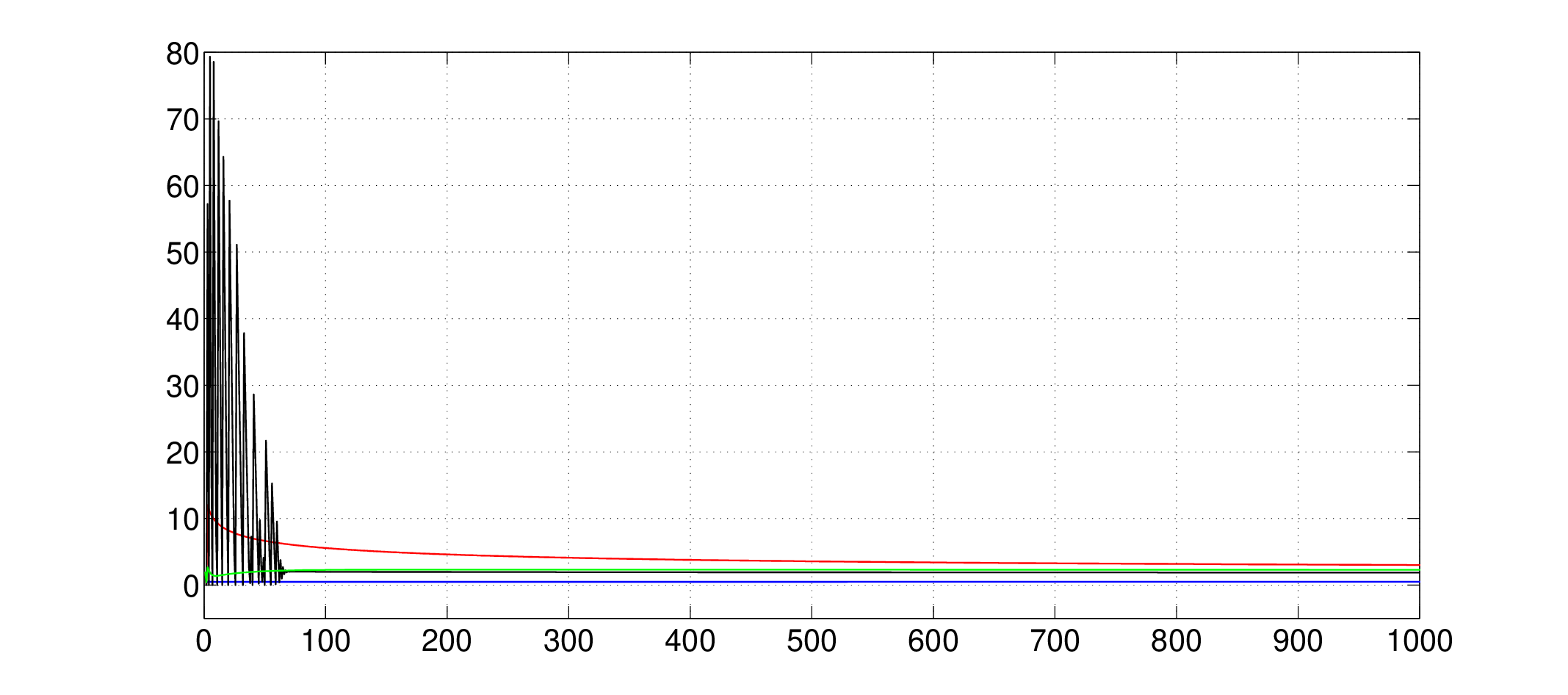}
  \caption{The dual estimates of $\mu^*_i$ of quadratic programming} \label{fig_QP_mu1}
\end{figure}

\end{document}